\documentclass{article}%
\usepackage{amsmath}
\usepackage{amsfonts}
\usepackage{amssymb}
\usepackage{graphicx}%
\setcounter{MaxMatrixCols}{30}
%TCIDATA{OutputFilter=latex2.dll}
%TCIDATA{Version=5.50.0.2960}
%TCIDATA{CSTFile=LaTeX article (bright).cst}
%TCIDATA{Created=Monday, December 21, 2015 12:54:06}
%TCIDATA{LastRevised=Thursday, January 31, 2019 16:22:31}
%TCIDATA{<META NAME="GraphicsSave" CONTENT="32">}
%TCIDATA{<META NAME="SaveForMode" CONTENT="1">}
%TCIDATA{BibliographyScheme=Manual}
%TCIDATA{<META NAME="DocumentShell" CONTENT="Standard LaTeX\Blank - Standard LaTeX Article">}
%TCIDATA{Language=American English}
%BeginMSIPreambleData
\providecommand{\U}[1]{\protect\rule{.1in}{.1in}}
%EndMSIPreambleData
\newtheorem{theorem}{Theorem}[section]

\newtheorem{conjecture}[theorem]{Conjecture}
\newtheorem{corollary}[theorem]{Corollary}

\newtheorem{lemma}[theorem]{Lemma}

\newtheorem{problem}[theorem]{Problem}
\newtheorem{proposition}[theorem]{Proposition}

\newenvironment{proof}[1][Proof]{\noindent\textbf{#1.} }{\ \rule{0.5em}{0.5em}}
\setlength{\textwidth}{5.3in}
\begin{document}

\author{Zakir Deniz\\Duzce University, Duzce, Turkey\\zakirdeniz@duzce.edu.tr
\and Vadim E. Levit\\Ariel University, Israel\\levitv@ariel.ac.il
\and Eugen Mandrescu\\Holon Institute of Technology, Israel\\eugen\_m@hit.ac.il}
\title{On graphs admitting two disjoint maximum independent sets}
\date{}
\maketitle

\begin{abstract}
An independent set $S$ is \textit{maximal} if it is not a proper subset of an
independent set, while $S$ is \textit{maximum} if it has a maximum size. The
problem of whether a graph has a pair of disjoint maximal independent sets was
introduced by Berge in \ early 70's. The class of graphs for which every
induced subgraph admits two disjoint maximal independent sets was
characterized in (Schaudt, 2015). It is known that deciding whether a graph
has two disjoint maximal independent sets is a \textbf{NP}-complete problem
(Henning \textit{et al}., 2009).

In this paper, we are focused on finding conditions ensuring the existence of
two disjoint maximum independent sets.

\textbf{Keywords}:\ maximum independent set, shedding vertex,
K\"{o}nig-Egerv\`{a}ry graph, unicyclic graph, well-covered graph, corona of graphs.

\end{abstract}

\section{Introduction}

Throughout this paper $G=(V,E)$ is a finite, undirected, loopless graph
without multiple edges, with vertex set $V=V(G)$ of cardinality $\left\vert
V\left(  G\right)  \right\vert =n\left(  G\right)  $, and edge set $E=E(G)$ of
size $\left\vert E\left(  G\right)  \right\vert =m\left(  G\right)  $. If
$X\subset V$, then $G[X]$ is the graph of $G$ induced by $X$. By $G-U$ we mean
the subgraph $G[V-U]$, if $U\subset V(G)$. We also denote by $G-F$ the
subgraph of $G$ obtained by deleting the edges of $F$, for $F\subset E(G)$,
and we write shortly $G-e$, whenever $F$ $=\{e\}$.

The \textit{neighborhood} $N(v)$ of $v\in V\left(  G\right)  $ is the set
$\{w:w\in V\left(  G\right)  $ \textit{and} $vw\in E\left(  G\right)  \}$,
while the \textit{closed neighborhood} $N[v]$\ of $v$ is the set
$N(v)\cup\{v\}$. Let $\deg\left(  v\right)  =\left\vert N(v)\right\vert $. If
$\deg\left(  v\right)  =1$, then $v$ is a \textit{leaf}, and $\mathrm{Leaf}%
\left(  G\right)  $ is the set containing all the leaves.

The \textit{neighborhood} $N(A)$ of $A\subseteq V\left(  G\right)  $ is
$\{v\in V\left(  G\right)  :N(v)\cap A\neq\emptyset\}$, and $N[A]=N(A)\cup A$.
We may also use $N_{G}(v),N_{G}\left[  v\right]  ,N_{G}(A)$ and $N_{G}\left[
A\right]  $, when referring to neighborhoods in a graph $G$.

$C_{n},K_{n},P_{n},K_{p,q}$ denote respectively, the cycle on $n\geq3$
vertices, the complete graph on $n\geq1$ vertices, the path on $n\geq1$
vertices, and the complete bipartite graph on $p+q$ vertices, where $p,q\geq1$.

A \textit{matching} is a set $M$ of pairwise non-incident edges of $G$, and by
$V\left(  M\right)  $ we mean the vertices covered by $M$. If $V\left(
M\right)  =V\left(  G\right)  $, then $M$ is a \textit{perfect matching}. The
size of a largest matching is denoted by $\mu\left(  G\right)  $. If every
vertex of a set $A$ is an endpoint of an edge $e\in M$, while the other
endpoint of $e$ belongs to some set $B$, disjoint from $A$, we say that $M$ is
a \textit{matching} \textit{from} $A$ \textit{into} $B$, or $A$ is
\textit{matched into} $B$ by $M$. In other words, $M$ may be interpreted as an
injection from the set $A$ into the set $B$.

The \textit{disjoint union} $G_{1}\cup G_{2}$ of the graphs $G_{1}$ and
$G_{2}$ with $V(G_{1})\cap V(G_{2})=$ $\emptyset$ is the graph having
$V(G_{1})\cup V(G_{2})$ and $E(G_{1})\cup E(G_{2})$ as a vertex set and an
edge set, respectively. In particular, $qG$ denotes the disjoint union of $q$
$\geq2$ copies of the graph $G$.

A set $S\subseteq V(G)$ is \textit{independent} if no two vertices from $S$
are adjacent, and by $\mathrm{Ind}(G)$ we mean the family of all the
independent sets of $G$. An independent set $A$ is \textit{maximal} if
$A\cup\left\{  v\right\}  $ is not independent, for every $v\in V\left(
G\right)  -A$. An independent set of maximum size is a \textit{maximum
independent set} of $G$, and $\alpha(G)=\max\{\left\vert S\right\vert
:S\in\mathrm{Ind}(G)\}$.

\begin{theorem}
\cite{Berge1982}\label{th6} In a graph $G$, an independent set $S$ is maximum
if and only if every independent set disjoint from $S$ can be matched into $S$.
\end{theorem}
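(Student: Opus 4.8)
The plan is to prove both directions of this characterization of maximum independent sets due to Berge.

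The plan is to prove the two implications separately, using Hall's marriage theorem for the harder direction. Both directions turn on the same exchange idea: trading a subset of one independent set against its neighborhood in the other.

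For the forward direction (maximum $\Rightarrow$ matchable), I would take an arbitrary independent set $T$ disjoint from $S$ and form the bipartite graph $H$ with parts $S$ and $T$ whose edges are exactly the edges of $G$ joining $S$ to $T$. A matching of $H$ saturating $T$ is precisely a matching of $T$ into $S$, so the goal is to produce one. By Hall's theorem such a matching exists if and only if $\left\vert N_{H}(T')\right\vert \geq \left\vert T'\right\vert$ for every $T'\subseteq T$, where $N_{H}(T')=N(T')\cap S$. I would argue by contradiction: if Hall's condition failed, there would be a set $T'\subseteq T$ with $\left\vert N_{S}(T')\right\vert < \left\vert T'\right\vert$, writing $N_{S}(T')=N(T')\cap S$. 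The key observation is then that $S^{*}=(S\setminus N_{S}(T'))\cup T'$ is again independent, since deleting every $S$-neighbor of $T'$ destroys all edges between the two pieces, and its size is $\left\vert S^{*}\right\vert = \left\vert S\right\vert - \left\vert N_{S}(T')\right\vert + \left\vert T'\right\vert > \left\vert S\right\vert$, contradicting the maximality of $S$.

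For the reverse direction I would start from any maximum independent set $A$ and apply the hypothesis to $T=A\setminus S$, which is independent and disjoint from $S$. This yields an injection from $A\setminus S$ into $S$ given by a matching. The crucial point is that the image actually lands inside $S\setminus A$: each $t\in A\setminus S$ is matched to one of its neighbors in $S$, and that neighbor cannot lie in $A$, because $A$ is independent and already contains $t$. Hence $\left\vert A\setminus S\right\vert \leq \left\vert S\setminus A\right\vert$, and adding $\left\vert A\cap S\right\vert$ to both sides gives $\left\vert A\right\vert \leq \left\vert S\right\vert$; since $A$ is maximum this forces $\left\vert S\right\vert = \alpha(G)$, so $S$ is maximum as well.

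I expect the only genuinely delicate step to be in the forward direction, namely verifying that the exchanged set $S^{*}$ is independent: one must check that there are no edges inside $T'$, none inside $S\setminus N_{S}(T')$, and, most importantly, none between them, which is exactly what deleting $N_{S}(T')$ guarantees. The reverse direction is then essentially a counting argument, provided one correctly locates the image of the matching in $S\setminus A$. An alternative to invoking Hall's theorem in the forward direction would be a direct alternating-path argument, but the Hall-based exchange seems the cleanest route.
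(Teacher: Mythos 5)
The paper states this result as a citation to Berge (1982) and gives no proof of its own, so there is nothing internal to compare your argument against; I can only assess it on its own terms, and it is correct. Your forward direction is the standard Hall-plus-exchange argument: the verification that $S^{*}=(S\setminus N_{S}(T'))\cup T'$ is independent is exactly right, since the only possible edges would run between $T'$ and $S$, and every such edge ends in $N_{S}(T')$, which has been removed; the size count then contradicts maximality of $S$. Your reverse direction is also sound, and you correctly handle the one point where a careless version would fail, namely that the matching sends $A\setminus S$ into $S\setminus A$ (because a neighbor of $t\in A$ cannot itself lie in the independent set $A$), which is what makes the inequality $\left\vert A\setminus S\right\vert \leq\left\vert S\setminus A\right\vert$ legitimate before adding $\left\vert A\cap S\right\vert$ to both sides. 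Note also that the paper's definition of ``matched into'' (every vertex of $T$ covered by a matching edge whose other end lies in $S$) is precisely the injection you use, and the degenerate case $A\subseteq S$ causes no trouble. This is a complete and clean proof of the cited theorem.
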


Let \textrm{core}$(G)=%
%TCIMACRO{\dbigcap }%
%BeginExpansion
{\displaystyle\bigcap}
%EndExpansion
\{S:S\in\Omega(G)\}$, where $\Omega(G)$ denotes the family of all maximum
independent sets \cite{LevMan2002}.

\begin{theorem}
\cite{LevMan2001}\label{th8} A connected bipartite graph $G$ has a perfect
matching if and only if $\mathrm{core}\left(  G\right)  =\emptyset$.
\end{theorem}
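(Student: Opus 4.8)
The plan is to prove the two implications separately, letting $(A,B)$ denote the bipartition of the connected bipartite graph $G$.

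For the forward direction, suppose $G$ has a perfect matching $M$. Since $G$ is bipartite, $M$ matches $A$ bijectively onto $B$, so $|A|=|B|$, and every independent set disjoint from $A$ is contained in $B$ and is therefore matched into $A$ by $M$; by Theorem \ref{th6} this makes $A$ a maximum independent set, and symmetrically $B\in\Omega(G)$. Consequently $\mathrm{core}(G)\subseteq A\cap B=\emptyset$. This part is routine.

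For the converse I would argue by contraposition: assuming $G$ has no perfect matching, I would exhibit a nonempty set contained in every maximum independent set. Fix a maximum matching $M$; since it is not perfect, some vertex $u$ is $M$-unsaturated, and after possibly swapping $A$ and $B$ we may take $u\in A$. Let $Z$ be the set of vertices reachable from $u$ by $M$-alternating paths, and put $X=Z\cap A$ and $Y=Z\cap B$. Standard alternating-path bookkeeping (using that $M$ is maximum, hence admits no augmenting path) supplies the three facts I will need: $N(X)=Y$; every vertex of $Y$ is $M$-saturated and its partner lies in $X$, so $M$ restricted to $X\cup Y$ saturates $Y$; and $u\in X\setminus M(Y)$, whence $|X|=|Y|+|X\setminus M(Y)|>|Y|$.

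The heart of the argument, and the step I expect to be the main obstacle, is showing that $X\subseteq S$ for every $S\in\Omega(G)$. Writing $S_{X}=S\cap X$, $S_{Y}=S\cap Y$ and $S'=S\setminus(X\cup Y)$, the identity $N(X)=Y$ forces $X\cup S'$ to be independent, so maximality of $S$ yields $|S_{X}|+|S_{Y}|\geq|X|$. On the other hand, since $M$ saturates $Y$, an independent set can meet each edge of $M\cap E(G[X\cup Y])$ in at most one endpoint, giving $|S\cap(X\cup Y)|\leq|Y|+|X\setminus M(Y)|=|X|$. Hence equality holds throughout, so $S\cap(X\cup Y)$ meets every such matching edge in exactly one endpoint and contains all unsaturated vertices of $X$, in particular $u\in S$. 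Propagating this along the alternating paths defining $Z$ — from a saturated $X$-vertex in $S$ across a non-matching edge to its $Y$-neighbor (excluded from $S$), then along the matching edge to the next $X$-vertex (forced into $S$) — I conclude that every vertex of $X$ lies in $S$. Since $S$ was arbitrary, $\emptyset\neq\{u\}\subseteq X\subseteq\mathrm{core}(G)$, completing the contrapositive. The delicate point is precisely this forcing argument, where connectivity of $Z$ under $M$-alternating paths is what guarantees the conclusion reaches all of $X$ rather than a proper subset.
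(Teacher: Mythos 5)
The paper offers no proof of this theorem --- it is quoted from \cite{LevMan2001} --- so there is no in-paper argument to compare against; judged on its own, your proof is correct and complete. The forward direction is the expected application of Theorem \ref{th6} to the two colour classes. For the converse, the alternating-path construction is sound: the three facts you isolate ($N(X)=Y$; every vertex of $Y$ is saturated with partner in $X$; $u\in X\setminus M(Y)$) are exactly the standard consequences of $M$ being maximum in a bipartite graph, and the two counting bounds do what you claim. Since $X\cup S'$ is independent (as $N(X)=Y$ and $S'$ avoids $Y$), $|X|+|S'|\leq \alpha(G)=|S|$ gives $|S\cap(X\cup Y)|\geq |X|$, while the partition of $X\cup Y$ into the $|Y|$ matching edges plus the unmatched vertices $X\setminus M(Y)$ gives $|S\cap(X\cup Y)|\leq |Y|+|X\setminus M(Y)|=|X|$; equality then forces $u\in S$ for every $S\in\Omega(G)$, which already yields $\mathrm{core}(G)\neq\emptyset$ and finishes the contrapositive. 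The further propagation showing $X\subseteq\mathrm{core}(G)$ is valid (each vertex of $X$ lies on an alternating path from $u$, and the "exactly one endpoint per matching edge" conclusion drives the induction along that path), but it is a bonus rather than a necessity. Two minor observations: connectivity of $G$ is never actually used in your argument, so you prove slightly more than stated; and in your setting $X\setminus M(Y)$ is just $\{u\}$, though your bookkeeping wisely does not rely on that.
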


If $\alpha(G)+\mu(G)=n\left(  G\right)  $, then $G$ is \textit{a
K\"{o}nig-Egerv\'{a}ry graph }\cite{Deming,Sterboul}. It is known that every
bipartite graph is a K\"{o}nig-Egerv\'{a}ry graph as well.

Let $v\in V\left(  G\right)  $. If for every independent set $S$ of
$G-N\left[  v\right]  $, there exists some $u\in N\left(  v\right)  $ such
that $S\cup\left\{  u\right\}  $ is independent, then $v$ is a
\textit{shedding vertex} of $G$ \cite{Woodroofe2009}. Clearly, no isolated
vertex may be a shedding vertex. On the other hand, every vertex of degree
$n\left(  G\right)  -1$ is a shedding vertex. Let $\mathrm{Shed}\left(
G\right)  $ denote the set of all shedding vertices. For instance,
$\mathrm{Shed}\left(  K_{1}\right)  =\emptyset$, while $\mathrm{Shed}\left(
K_{n}\right)  =V\left(  K_{n}\right)  $ for every $n\geq2$.

A vertex $v$ of a graph $G$ is \textit{simplicial} if the induced subgraph of
$G$ on the set $N[v]$ is a complete graph and this complete graph is called a
simplex of $G$. Clearly, every leaf is a simplicial vertex. Let $\mathrm{Simp}%
\left(  G\right)  $ denote the set of all simplicial vertices.

\begin{proposition}
\label{prop1}\cite{Woodroofe2009} If $v\in\mathrm{Simp}\left(  G\right)  $,
then $N\left(  v\right)  \subseteq\mathrm{Shed}\left(  G\right)  $.
\end{proposition}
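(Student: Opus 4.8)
The plan is to fix a simplicial vertex $v$ and an arbitrary neighbor $u\in N(v)$, and to verify directly that $u$ meets the defining property of a shedding vertex. The single vertex $v$ itself will serve as the universal witness: for every independent set we must extend, adjoining $v$ will do the job.

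The key step I would establish first is the containment $N[v]\subseteq N[u]$. Since $v$ is simplicial, $G[N[v]]$ is complete, so any two vertices of $N[v]$ are adjacent. As $u\in N[v]$, every vertex $x\in N[v]$ with $x\neq u$ is adjacent to $u$ and hence lies in $N(u)$; combined with $u\in N[u]$, this yields $N[v]\subseteq N[u]$, and in particular $N(v)\subseteq N[u]$. This is really the whole content of the proposition, and it is immediate from the clique structure of $N[v]$.

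With this in hand, take an arbitrary independent set $S$ of $G-N[u]$, so that $S\cap N[u]=\emptyset$. I claim $S\cup\{v\}$ is independent. Indeed, $v\in N[u]$ forces $v\notin S$, and since $N(v)\subseteq N[u]$ is disjoint from $S$, the vertex $v$ has no neighbor in $S$; as $S$ is already independent, so is $S\cup\{v\}$. Because $u\in N(v)$ is equivalent to $v\in N(u)$, the vertex $v$ is exactly the required element of $N(u)$ extending $S$. Since this works for every such $S$, the vertex $u$ is a shedding vertex, and as $u$ was an arbitrary neighbor of $v$ we conclude $N(v)\subseteq\mathrm{Shed}(G)$.

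There is no genuine obstacle in this argument; the only point to keep straight is the direction of the shedding condition. One must produce a neighbor of $u$ (not of $v$) that extends $S$, and $v$ qualifies precisely because $u\in N(v)$ gives $v\in N(u)$. Everything else follows from the one structural observation that the closed neighborhood of a simplicial vertex is contained in the closed neighborhood of each of its neighbors.
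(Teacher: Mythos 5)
Your argument is correct and complete. The paper states Proposition \ref{prop1} as a cited result from \cite{Woodroofe2009} without giving a proof, so there is nothing internal to compare against; but your reasoning is exactly the standard one. It is worth noting that your key step, the containment $N[v]\subseteq N[u]$ for every $u\in N(v)$, says precisely that $u$ is \emph{codominated} by $v$ in the paper's terminology, so once that containment is established you could also conclude immediately by invoking Lemma \ref{lem11} (every codominated vertex is a shedding vertex) instead of verifying the shedding condition by hand; your direct verification with $v$ as the universal witness is the proof of that lemma specialized to this situation, and both the containment and the extension step are handled correctly, including the small but essential point that $v\in N(u)$ and $v\notin S$.
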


A graph $G$ is said to be \textit{simplicial} if every vertex of $G$ belongs
to a simplex of $G$. By Proposition \ref{prop1}, if every simplex of a
simplicial graph $G$ contains two simplicial vertices at least, then
$\mathrm{Shed}\left(  G\right)  =V\left(  G\right)  $. The converse is not
necessarily true. For instance, $C_{5}$ has no simplicial vertex, while
$\mathrm{Shed}\left(  C_{5}\right)  =V\left(  C_{5}\right)  $.

A vertex $v\in V\left(  G\right)  $ is \textit{codominated} if there is
another vertex $u\in V\left(  G\right)  $ such that $N\left[  u\right]
\subseteq$ $N\left[  v\right]  $. In such a case, we say that $v$ is
codominated by $u$. For instance, consider the graphs $G_{1}$ and $G_{2}$ from
Figure \ref{fig2}: $x,z\in\mathrm{Shed}(G_{1})$, and both vertices are
codominated, while $w\in\mathrm{Shed}(G_{2})$ and $w$ is not codominated.

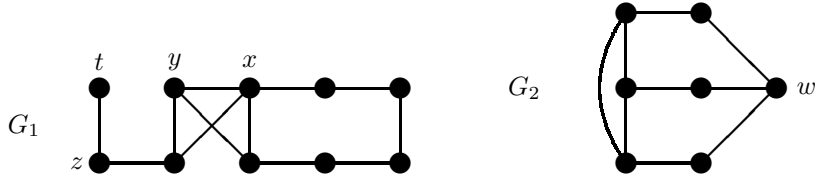
\begin{figure}[h]
\setlength{\unitlength}{1cm}\begin{picture}(5,2)
\thicklines
\multiput(2,0)(1,0){5}{\circle*{0.29}}
\multiput(2,1)(1,0){5}{\circle*{0.29}}
\put(2,0){\line(1,0){1}}
\put(2,0){\line(0,1){1}}
\put(3,1){\line(1,0){1}}
\put(3,1){\line(1,-1){1}}
\put(3,0){\line(0,1){1}}
\put(3,0){\line(1,1){1}}
\put(4,0){\line(0,1){1}}
\put(4,0){\line(1,0){2}}
\put(4,1){\line(1,0){2}}
\put(6,0){\line(0,1){1}}
\put(2,1.35){\makebox(0,0){$t$}}
\put(1.7,0){\makebox(0,0){$z$}}
\put(3,1.35){\makebox(0,0){$y$}}
\put(4,1.35){\makebox(0,0){$x$}}
\put(1,0.5){\makebox(0,0){$G_{1}$}}
\multiput(9,0)(1,0){2}{\circle*{0.29}}
\multiput(9,1)(1,0){3}{\circle*{0.29}}
\multiput(9,2)(1,0){2}{\circle*{0.29}}
\put(9,0){\line(1,0){1}}
\put(9,0){\line(0,1){2}}
\put(9,1){\line(1,0){2}}
\put(9,2){\line(1,0){1}}
\put(10,0){\line(1,1){1}}
\put(10,2){\line(1,-1){1}}
\qbezier(9,0)(8.3,1)(9,2)
\put(11.4,1){\makebox(0,0){$w$}}
\put(7.65,1){\makebox(0,0){$G_{2}$}}
\end{picture}\caption{$N[t]\subseteq N[z]$ and $N[y]\subseteq N[x]$.}%
\label{fig2}%
\end{figure}

\begin{lemma}
\label{lem11}\cite{Biyi} Every codominated vertex is a shedding vertex as
well. Moreover, in a bipartite graph, each shedding vertex is also a
codominated vertex, and if $x$ is codominated by $y$, then $y$ is a leaf.
\end{lemma}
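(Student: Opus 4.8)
The plan is to treat the three assertions separately, proving the first for an arbitrary graph and the last two under the bipartiteness hypothesis.

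First I would show that a codominated vertex is shedding. Suppose $v$ is codominated by $u$, so $u\neq v$ and $N[u]\subseteq N[v]$. Since $u\in N[u]\subseteq N[v]$ and $u\neq v$, we get $u\in N(v)$. I claim this single vertex $u$ serves as a witness for \emph{every} independent set at once. Indeed, take any $S\in\mathrm{Ind}(G-N[v])$, so $S\cap N[v]=\emptyset$. If some $s\in S$ were adjacent to $u$, then $s\in N(u)\subseteq N[u]\subseteq N[v]$, contradicting $S\cap N[v]=\emptyset$; moreover $u\notin S$ because $u\in N[v]$. Hence $S\cup\{u\}$ is independent, and $v$ is shedding.

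Next I would dispose of the ``leaf'' claim, which uses only the bipartite structure. Fix a bipartition $(A,B)$ and suppose $x$ is codominated by $y$. Since $y\in N[y]\subseteq N[x]$ and $y\neq x$, we have $xy\in E$, so $x$ and $y$ lie in different classes, say $x\in A$ and $y\in B$; then $N(y)\subseteq A$. Every $w\in N(y)$ satisfies $w\in N[y]\subseteq N[x]$, but $N[x]\cap A=\{x\}$ because $N(x)\subseteq B$. Thus $N(y)=\{x\}$, so $y$ is a leaf.

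Finally, the heart of the lemma is that in a bipartite graph every shedding vertex $v$ is codominated. A shedding vertex cannot be isolated, so $N(v)\neq\emptyset$; it suffices to produce a leaf neighbor $u$ of $v$, since then $N[u]=\{u,v\}\subseteq N[v]$ exhibits $v$ as codominated by $u$. I would argue by contraposition: assume no neighbor of $v$ is a leaf, so each $u\in N(v)$ has a neighbor $w_u\neq v$. Taking $v\in A$ we have $N(v)\subseteq B$, hence each $w_u\in A$ and, since $N[v]\cap A=\{v\}$, also $w_u\notin N[v]$. Now set $S_0=\{w_u:u\in N(v)\}$. Then $S_0\subseteq V-N[v]$, and $S_0$ is independent \emph{because $A$ is an independent class of the bipartition}; and every $u\in N(v)$ is adjacent to its own $w_u\in S_0$, so $S_0\cup\{u\}$ fails to be independent for all $u\in N(v)$. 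Thus $S_0$ witnesses that $v$ is not shedding. The step I expect to carry the whole argument is exactly this construction of the blocking set $S_0$: bipartiteness is precisely what makes $S_0\subseteq A$ automatically independent, and this is where the statement would break down for a general graph.
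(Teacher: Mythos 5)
The paper states this lemma as a citation of \cite{Biyi} and gives no proof of its own, so there is nothing internal to compare against; judged on its own terms, your argument is correct and complete. All three parts check out: the codominating vertex $u\in N(v)$ is a universal witness for sheddingness since any $S$ avoiding $N[v]$ also avoids $N(u)$; the leaf claim follows cleanly from $N(y)\subseteq N[x]\cap A=\{x\}$; and the contrapositive construction of the blocking set $S_0=\{w_u:u\in N(v)\}$, which is automatically independent because it lies in one side of the bipartition, correctly shows that a shedding vertex of a bipartite graph must have a leaf neighbor and hence is codominated. This is the standard direct argument for the result.
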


\begin{theorem}
\label{th4}\cite{CaCrRey2016} If $v\in\mathrm{Shed}\left(  G\right)  $, then
one of the following hold:

\emph{(i)} there exists $u\in N\left(  v\right)  $, such that $N\left[
u\right]  \subseteq N\left[  v\right]  $, i.e., $v$ is a codominated vertex;

\emph{(ii)} $v$ belongs to some $5$-cycle.
\end{theorem}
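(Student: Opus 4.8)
The plan is to prove the contrapositive: assuming that $v$ is neither codominated nor contained in any $5$-cycle, I will show that $v\notin\mathrm{Shed}(G)$. The first step is to recast the shedding condition in a dual form. Unwinding the definition, $v$ fails to be a shedding vertex precisely when there is an independent set $S\subseteq V(G)-N[v]$ such that $S\cup\{u\}$ is not independent for every $u\in N(v)$. Since such a $u$ lies in $N[v]$ and hence is not in $S$, this happens exactly when every neighbor of $v$ has a neighbor in $S$. Thus it suffices to exhibit an independent set $S$ lying outside $N[v]$ that dominates $N(v)$.

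Next I would fix the relevant vertex sets. Write $A=N(v)$ and let $B$ be the second neighborhood of $v$, i.e. the set of vertices at distance exactly $2$ from $v$. If $A=\emptyset$, then $v$ is isolated and the empty set already witnesses that $v$ is not shedding, so I may assume $A\neq\emptyset$. Because $v$ is not codominated, for every $u\in A$ we have $N[u]\not\subseteq N[v]$; as both $v$ and $u$ lie in $N[v]$, the witnessing vertex must be a neighbor of $u$ outside $N[v]$, hence a vertex of $B$. In particular $N(u)\cap B\neq\emptyset$ for each $u\in A$, and $B\neq\emptyset$.

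The heart of the argument is the following choice: let $S$ be a maximal independent set of the induced subgraph $G[B]$. I claim $S$ dominates $A$. Suppose not, and pick $u\in A$ with no neighbor in $S$. Choose any $w\in N(u)\cap B$. Then $w\notin S$ (otherwise $w$ would be a neighbor of $u$ lying in $S$), so by maximality of $S$ in $G[B]$ the vertex $w$ has a neighbor $w'\in S\subseteq B$, and $w'\neq w$. Since $w'\in B$, it has some neighbor $u'\in A$. The vertices $v,u,w,w',u'$ then close up into the walk $v\,u\,w\,w'\,u'\,v$ using the edges $vu,\ uw,\ ww',\ w'u',\ u'v$, and it only remains to check that these five vertices are distinct. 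The sets $\{v\}$, $A$, $B$ are pairwise disjoint, which gives all the cross-distinctness; $w\neq w'$ since they are adjacent; and $u\neq u'$ because $u'$ is adjacent to $w'\in S$ whereas $u$ has no neighbor in $S$. Hence $v\,u\,w\,w'\,u'$ is a genuine $5$-cycle through $v$, contradicting our assumption.

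Therefore $S$ dominates $A=N(v)$. As $S$ is an independent subset of $V(G)-N[v]$, this shows $v\notin\mathrm{Shed}(G)$, establishing the contrapositive and hence the theorem. I expect the only delicate point to be the final distinctness verification that upgrades the closed walk to an honest $C_{5}$: everything hinges on selecting the neighbor $w'$ of $w$ inside the maximal independent set $S$, since membership $w'\in S$ is exactly what forces $u\neq u'$ and prevents the walk from degenerating into something shorter.
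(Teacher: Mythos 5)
The paper does not prove this statement; it is quoted from \cite{CaCrRey2016} without proof, so there is no internal argument to compare against. Your contrapositive argument is, as far as I can check, correct and complete. The reformulation of ``$v$ is not shedding'' as the existence of an independent set in $G-N[v]$ dominating $N(v)$ is the right dual view (the point that $u\notin S$ forces the non-independence of $S\cup\{u\}$ to come from an edge into $S$ is handled). The key choices all work: non-codomination of $v$ gives each $u\in A$ a neighbour in the second neighbourhood $B$; maximality of $S$ in $G[B]$ supplies the neighbour $w'\in S$ of $w$; and the distinctness check is exactly where the argument could have failed --- your observation that $u\neq u'$ because $u'$ sees $S$ while $u$ by assumption does not is the essential step, and the remaining identifications are ruled out by the distance partition $\{v\}\cup A\cup B$. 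Note that the $5$-cycle you produce need not be induced (chords $uu'$ or $wu'$ are not excluded), but the statement only asks for a $5$-cycle containing $v$, so this is not a defect. The degenerate cases ($N(v)=\emptyset$, and $u\in A$ already dominated by $S$) are also treated. I have no corrections.
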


A graph is \textit{well-covered} if all its maximal independent sets are also
maximum \cite{plum}. If $G$ is well-covered, without isolated vertices, and
$n\left(  G\right)  =2\alpha\left(  G\right)  $, then $G$ is a \textit{very
well-covered graph} \cite{Favaron1982}. The only well-covered cycles are
$C_{3}$, $C_{4}$, $C_{5}$ and $C_{7}$, while $C_{4}$ is the unique very
well-covered cycle.

\begin{theorem}
\label{th77}\cite{LevMan2008}\emph{ }$G$ is very well-covered if and only if
$G$ is a well-covered K\"{o}nig-Egerv\`{a}ry graph.
\end{theorem}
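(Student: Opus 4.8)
The plan is to funnel both implications through a single matching inequality. Throughout I assume $G$ has no isolated vertices: this is built into the definition of a very well-covered graph, and for the converse it must be imposed as a hypothesis, since $K_{1}$ is a well-covered K\"{o}nig-Egerv\`{a}ry graph that is not very well-covered. I will use the elementary bound $\alpha(G)+\mu(G)\leq n(G)$ — if $S\in\Omega(G)$ then $V(G)-S$ is a vertex cover, so no matching exceeds $\left\vert V(G)-S\right\vert =n(G)-\alpha(G)$ edges. In a K\"{o}nig-Egerv\`{a}ry graph this yields $\mu(G)\leq n(G)/2\leq\alpha(G)$, with equality $\mu(G)=\alpha(G)$ exactly when $n(G)=2\alpha(G)$.

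The whole theorem then reduces to one lemma: if $G$ is well-covered without isolated vertices, then $\mu(G)\geq\alpha(G)$ (indeed, every $S\in\Omega(G)$ can be matched into $V(G)-S$). Granting this, the forward direction is immediate — a very well-covered $G$ has $n(G)=2\alpha(G)$, so $\mu(G)\leq\alpha(G)$ from the bound and $\mu(G)\geq\alpha(G)$ from the lemma, giving $\alpha(G)+\mu(G)=n(G)$ — and so is the converse: a well-covered K\"{o}nig-Egerv\`{a}ry graph has $\mu(G)\leq\alpha(G)$ and $\mu(G)\geq\alpha(G)$, hence $n(G)=\alpha(G)+\mu(G)=2\alpha(G)$, which together with well-coveredness and the absence of isolated vertices is the definition of very well-covered.

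To prove the lemma I would run Hall's theorem on the bipartite graph of edges between $S$ and $V(G)-S$. Suppose $S$ cannot be matched into $V(G)-S$ and take a smallest $A\subseteq S$ with $\left\vert N(A)\right\vert <\left\vert A\right\vert $; put $B=N(A)$, so $N[A]=A\cup B$ and no vertex of $A$ has a neighbour outside $B$. First I would record a sub-claim: for every maximal independent set $R$ of $G-N[A]$, the set $A\cup R$ is maximal in $G$, so well-coveredness forces $\left\vert A\cup R\right\vert =\alpha(G)$; this shows at once that $G-N[A]$ is well-covered with $\alpha(G-N[A])=\alpha(G)-\left\vert A\right\vert $. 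Next, from a maximal independent set $Y$ of $G[B]$ I would form $Z=Y\cup(A-N(Y))$, a maximal independent set of $G[A\cup B]$. Since $A$ is a minimal violator, Hall holds for the proper subset $A-N(Y)$, and as its neighbours lie in $B-Y$ this gives $\left\vert A-N(Y)\right\vert \leq\left\vert B-Y\right\vert $, whence $\left\vert Z\right\vert \leq\left\vert B\right\vert \leq\left\vert A\right\vert -1$.

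Finally I would extend $Z$ to a maximal independent set $T$ of $G$; because $Z$ already dominates $A\cup B$, the newly added vertices form an independent set in $G-N[A]$ and so number at most $\alpha(G)-\left\vert A\right\vert $. Hence $\left\vert T\right\vert \leq(\left\vert A\right\vert -1)+(\alpha(G)-\left\vert A\right\vert )=\alpha(G)-1$, contradicting well-coveredness and proving the lemma. The main obstacle is precisely this construction — turning a Hall deficiency into a provably \emph{small} maximal independent set — together with the two spots where well-coveredness is essential: verifying that $A\cup R$ is maximal in $G$ (so that $G-N[A]$ inherits well-coveredness along with the exact value $\alpha(G)-\left\vert A\right\vert $), and checking that $A-N(Y)$ is a proper subset of $A$, which needs $Y\neq\emptyset$; this in turn uses that $G$ has no isolated vertex, forcing $\left\vert A\right\vert \geq2$ and $B\neq\emptyset$.
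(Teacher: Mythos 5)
The paper does not actually prove this statement---it is quoted from \cite{LevMan2008} as a known result---so there is no internal proof to compare you against; I can only judge your argument on its own terms, and it checks out. Your reduction of both implications to the single lemma ``well-covered without isolated vertices implies $\mu(G)\geq\alpha(G)$'' is sound, and the Hall-deficiency argument for the lemma is correct in every detail I verified: the minimal violator $A$ has $B=N(A)\neq\emptyset$ because there are no isolated vertices; $A\cup R$ is indeed maximal in $G$ for every maximal independent set $R$ of $G-N[A]$, which pins down $\alpha(G-N[A])=\alpha(G)-\left\vert A\right\vert$; the set $Z=Y\cup(A-N(Y))$ is maximal in $G[A\cup B]$, and minimality of $A$ applied to the proper subset $A-N(Y)$ (proper because $Y\neq\emptyset$ and $Y\subseteq N(A)$) yields $\left\vert Z\right\vert \leq\left\vert B\right\vert \leq\left\vert A\right\vert -1$; finally any maximal extension $T$ of $Z$ meets $A\cup B$ exactly in $Z$, so $\left\vert T\right\vert \leq\alpha(G)-1$, contradicting well-coveredness. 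Your observation that the right-hand side needs a ``no isolated vertices'' proviso (e.g., $K_{1}$ or $K_{1}\cup K_{2}$ are well-covered K\"{o}nig-Egerv\'{a}ry graphs that are not very well-covered) is a legitimate correction of the statement as printed. For comparison, the proofs in the literature typically route the forward direction through Favaron's structure theorem for very well-covered graphs, which hands you a perfect matching and hence $\mu=\alpha=n/2$ directly; your deficiency argument is more self-contained and establishes the stronger, reusable fact that in a well-covered graph with no isolated vertices every maximum independent set can be matched into its complement.
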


Let $\mathcal{H}=\{H_{v}:v\in V(G)\}$ be a family of graphs indexed by the
vertex set of a graph $G$. The corona $G\circ\mathcal{H}$ of $G$ and
$\mathcal{H}$ is the disjoint union of $G$ and $H_{v},v\in V(G)$, with
additional edges joining each vertex $v\in V(G)$ to all the vertices of
$H_{v}$. If $H_{v}=H$ for every $v\in V(G)$, then we denote $G\circ H$ instead
of $G\circ\mathcal{H}$ \cite{FruchtHarary}. It is known that $G\circ
\mathcal{H}$ is well-covered if and only if each $H_{v},v\in V(G)$, is a
complete graph \cite{Topp}.

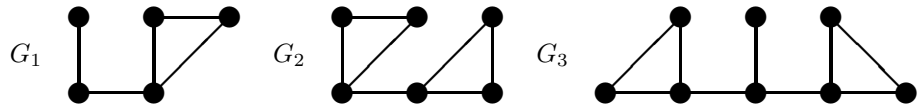
\begin{figure}[h]
\setlength{\unitlength}{1cm}\begin{picture}(5,1.25)\thicklines
\multiput(1.5,0)(1,0){2}{\circle*{0.29}}
\multiput(1.5,1)(1,0){3}{\circle*{0.29}}
\put(1.5,0){\line(1,0){1}}
\put(2.5,1){\line(1,0){1}}
\put(1.5,0){\line(0,1){1}}
\put(2.5,0){\line(0,1){1}}
\put(2.5,0){\line(1,1){1}}
\put(0.8,0.5){\makebox(0,0){$G_{1}$}}
\multiput(5,0)(1,0){3}{\circle*{0.29}}
\multiput(5,1)(1,0){3}{\circle*{0.29}}
\put(5,0){\line(1,0){2}}
\put(5,0){\line(1,1){1}}
\put(5,1){\line(1,0){1}}
\put(5,0){\line(0,1){1}}
\put(6,0){\line(1,1){1}}
\put(7,0){\line(0,1){1}}
\put(4.3,0.5){\makebox(0,0){$G_{2}$}}
\multiput(8.5,0)(1,0){5}{\circle*{0.29}}
\multiput(9.5,1)(1,0){3}{\circle*{0.29}}
\put(8.5,0){\line(1,0){4}}
\put(8.5,0){\line(1,1){1}}
\put(9.5,0){\line(0,1){1}}
\put(10.5,0){\line(0,1){1}}
\put(11.5,0){\line(0,1){1}}
\put(11.5,1){\line(1,-1){1}}
\put(7.8,0.5){\makebox(0,0){$G_{3}$}}
\end{picture}\caption{$G_{1}=P_{2}\circ\left\{  K_{1},K_{2}\right\}  $,
$G_{2}=P_{2}\circ K_{2}$, $G_{3}=P_{3}\circ\left\{  K_{2},K_{1},K_{2}\right\}
$.}%
\label{fig3}%
\end{figure}

Recall that the \textit{girth} of a graph $G$ is the length of a shortest
cycle contained in $G$, and it is defined as the infinity for every forest.

\begin{theorem}
\label{th7}\emph{(i)} \cite{FinHarNow} Let $G$ be a connected graph of girth
$\geq6$, which is isomorphic to neither $C_{7}$ nor $K_{1}$. Then $G$ is
well-covered if and only if $G=H\circ K_{1}$ for some graph $H$.

\emph{(ii)} \cite{LevMan2007} Let $G$ be a connected graph of girth $\geq5$.
Then $G$ is very well-covered if and only if $G=H\circ K_{1}$ for some graph
$H$.
\end{theorem}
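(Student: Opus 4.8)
The plan is to prove both biconditionals by separating the two directions; the implication $G=H\circ K_{1}\Rightarrow G$ is (very) well-covered is elementary and common to (i) and (ii), while the converse is the substantive part and splits along the two girth hypotheses. For the easy direction, write $G=H\circ K_{1}$, so $n(G)=2\,n(H)$ and the pendant edges form a perfect matching; hence $\mu(G)=n(G)/2$, and from $\alpha(G)\le n(G)-\mu(G)$ we get $\alpha(G)\le n(G)/2$, with equality witnessed by $\mathrm{Leaf}(G)$. Now let $S$ be any maximal independent set. For each $v\in V(H)$ with pendant $v'$, maximality forces $S$ to meet the edge $vv'$ (otherwise $S\cup\{v'\}$ is independent, as $v'$ has no other neighbour), while independence forbids both endpoints; thus $S$ contains exactly one vertex of each pendant pair and $|S|=n(H)=\alpha(G)$. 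So $G$ is well-covered, has no isolated vertices, and satisfies $n(G)=2\alpha(G)$, i.e.\ $G$ is very well-covered. This settles the ``if'' part of both (i) and (ii) at once.

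For the converse I would first record two tools. \emph{Reduction:} if $G$ is well-covered and $A$ is independent, then $G-N[A]$ is well-covered with $\alpha(G-N[A])=\alpha(G)-|A|$, because any maximal independent set $T$ of $G-N[A]$ extends to the maximal — hence maximum — independent set $T\cup A$ of $G$. \emph{Uniqueness of pendants:} in \emph{any} well-covered graph no vertex has two leaf-neighbours; indeed, if $u_{1},u_{2}$ were leaves at $w$, I would extend $\{w\}$ to a maximal (hence maximum) independent set $S\ni w$ and observe that $(S\setminus\{w\})\cup\{u_{1},u_{2}\}$ is independent of size $\alpha(G)+1$, a contradiction. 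I would also note that girth $\ge 5$ rules out triangles and $4$-cycles (girth $\ge 6$ additionally rules out $5$-cycles), so a vertex whose neighbourhood is a clique has at most one neighbour; equivalently, the simplicial vertices are precisely the leaves.

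For (i) the linchpin is the structural fact that a connected well-covered graph of girth $\ge 5$ with minimum degree $\ge 2$ is $C_{5}$ or $C_{7}$; granting this, girth $\ge 6$ and $G\notin\{C_{7},K_{1}\}$ force a leaf $u$ with support $w$, which by the uniqueness tool is the only leaf at $w$. Peeling $N[u]=\{u,w\}$ via the Reduction tool yields a smaller well-covered graph of girth $\ge 6$ with $\alpha$ dropping by one, to which I would apply induction componentwise and then reattach $u,w$. The uniqueness tool guarantees that no isolated vertex is created, so no spurious $K_{1}$ component appears; the genuine obstacle is to show that every neighbour of a support vertex is itself a support vertex (equivalently, that $G$ has no two adjacent pendant-free vertices) and to exclude stray $C_{7}$ components arising after deletion. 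This delicate configuration analysis — interlocking the girth bound with well-coveredness — is the combinatorial heart of the argument and the step I expect to be hardest; it is exactly what the cited Finbow--Hartnell--Nowakowski characterization carries out.

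Finally, for (ii) I would exploit the stronger hypothesis through Theorem \ref{th77}: a very well-covered $G$ is a well-covered K\"onig--Egerv\'ary graph, so $\alpha(G)+\mu(G)=n(G)=2\alpha(G)$ gives $\mu(G)=\alpha(G)=n(G)/2$, i.e.\ $G$ has a perfect matching $M$. It suffices to prove that every edge of $M$ is a pendant edge; combined with the uniqueness tool this makes the non-leaves the vertex set of $H:=G[\,V(G)\setminus\mathrm{Leaf}(G)\,]$, each carrying a unique pendant, whence $G=H\circ K_{1}$. Here the main obstacle is to rule out an edge $xy\in M$ with $\deg x,\deg y\ge 2$: using triangle- and $C_{4}$-freeness (girth $\ge 5$) together with very well-coveredness one forces a maximal independent set of the wrong size, the contradiction completing the proof. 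Note that $5$-cycles are admissible \emph{inside} $H$ — e.g.\ $C_{5}\circ K_{1}$ has girth $5$ and is very well-covered — so, unlike (i), the girth bound cannot be raised, and the very well-covered hypothesis is precisely what substitutes for forbidding $5$-cycles.
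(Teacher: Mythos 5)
This theorem is quoted from the literature (Finbow--Hartnell--Nowakowski for (i), Levit--Mandrescu for (ii)); the paper gives no proof of it, so there is no internal argument to compare against. Judged on its own terms, your proposal proves only the easy direction. The ``if'' part is complete and correct: the pendant edges form a perfect matching, $\alpha(G)\le n(G)-\mu(G)=n(G)/2$ with equality on $\mathrm{Leaf}(G)$, and every maximal independent set picks exactly one vertex from each pendant pair, so $G$ is very well-covered. Your two auxiliary tools (well-coveredness of $G-N[A]$ with $\alpha$ dropping by $|A|$, and the impossibility of two leaves at one support vertex) are also correctly justified.

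The converse directions, however, are not proved: for (i) you explicitly defer ``the combinatorial heart'' to the cited characterization, and for (ii) you state that one ``forces a maximal independent set of the wrong size'' without constructing it. These are precisely the substantive claims of the theorem, so the proposal as written is a reduction of the theorem to itself (or to the papers being cited), not a proof. Two specific points need repair if you want to make it self-contained. First, your ``linchpin'' --- that a connected well-covered graph of girth $\geq5$ with minimum degree $\geq2$ is $C_{5}$ or $C_{7}$ --- is itself an unproved deep claim, and as stated for girth $\geq5$ it is in tension with the actual Finbow--Hartnell--Nowakowski theorem, whose exceptional list at girth $5$ contains graphs beyond $C_{5}$ and $C_{7}$ (e.g.\ $P_{10}$, $P_{13}$, $Q_{13}$, $P_{14}$); you would need to verify the minimum-degree-$\geq2$ version at girth $\geq6$ separately. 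Second, the induction in (i) must handle components of $G-\{u,w\}$ isomorphic to $K_{1}$ or $C_{7}$, which are well-covered but not of the form $H\circ K_{1}$; you name this obstacle but do not resolve it. Until those steps are carried out, the argument establishes only that the theorem follows from the cited results.
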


The graph $P_{3}$ has two disjoint maximal independent sets, while $C_{4}$ has
even two disjoint maximum independent sets. On the other hand, the graph
$C_{5}\circ K_{1}$ has no pair of disjoint maximal independent sets. The
graphs from Figure \ref{fig2} have pairs of disjoint maximal (non-maximum)
independent sets, while the graphs from Figure \ref{fig3} have pairs of
disjoint maximum independent sets.

The research on the graphs admitting two disjoint maximal independent sets has
its roots in \cite{Berge1973,Payan1974}. Further, this topic was studied in
\cite{CoHe1976,EHP1982,Fieux2017,HLR2009,Payan1978,Schaudt2015}. A
constructive characterization of trees that have two disjoint maximal
independent sets of minimum size may be found in \cite{HayHen2005}.

By definition, for well-covered graphs to find out two disjoint maximal
independent sets is the same as to detect two maximum independent sets.

\begin{theorem}
\cite{Schaudt2015} Let $G$ be a well-covered graph without isolated vertices.
If $G$ does not contain $C_{2k+1}\circ K_{1}$ as an induced subgraph for
$k\geq1$, then $G$ has two disjoint maximum independent sets.
\end{theorem}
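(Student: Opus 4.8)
The plan is to use that, for a well-covered graph, a maximal independent set is automatically maximum, so it suffices to produce two disjoint \emph{maximal} independent sets, i.e. two disjoint independent dominating sets. Since a maximal independent set of $G$ is the union of maximal independent sets of its components, and each component is again well-covered, without isolated vertices and $C_{2k+1}\circ K_{1}$-free, I would first reduce to $G$ connected and then argue by induction on $n(G)$.

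Before the induction, I would record the model case $G=H\circ K_{1}$, which already explains the role of the forbidden subgraph and which (by Theorem \ref{th7}) accounts for the connected well-covered graphs of girth at least $6$ other than $C_{7}$. Writing $V(H)=\{v_{1},\dots ,v_{m}\}$ with pendant leaves $\ell_{1},\dots ,\ell_{m}$, every maximal independent set of $H\circ K_{1}$ is obtained by choosing an independent set $T\subseteq V(H)$ and taking $\{v_{i}:i\in T\}\cup\{\ell_{i}:i\notin T\}$; all such sets have size $m$, and two of them, coming from $T$ and $T'$, are disjoint exactly when $T,T'$ partition $V(H)$ into two independent sets. Hence $H\circ K_{1}$ has two disjoint maximal independent sets if and only if $H$ is bipartite, i.e. if and only if $H$ has no induced odd cycle, i.e. if and only if $G$ contains no induced $C_{2k+1}\circ K_{1}$. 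This settles the theorem in this case and sets the template: two disjoint maximal independent sets correspond to a proper $2$-colouring-type splitting that is obstructed precisely by an induced odd cycle carrying pendants.

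For the inductive step in general I would drive the argument with simplicial vertices. The key reduction is that if $v$ is simplicial with simplex $Q=N[v]$, then $G-Q$ is again well-covered with $\alpha(G-Q)=\alpha(G)-1$: any maximal independent set $S'$ of $G-Q$ extends to the maximal independent set $S'\cup\{v\}$ of $G$ (the vertex $v$ has no neighbour outside $Q$ and dominates $Q$), so well-coveredness of $G$ forces $|S'|=\alpha(G)-1$. Being an induced subgraph, $G-Q$ is still $C_{2k+1}\circ K_{1}$-free. If the simplex $Q$ contains a second simplicial vertex $v'$, the induction closes cleanly: take two disjoint maximal independent sets $A',B'$ of $G-Q$ and lift them to $A'\cup\{v\}$ and $B'\cup\{v'\}$; since $v,v'$ are simplicial they have no neighbours outside $Q$, so both lifted sets remain independent, disjoint and (dominating $Q$) maximal.

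The genuine difficulties, and where the hypotheses really bite, are the remaining cases. First, a simplex may contain a unique simplicial vertex, the typical instance being a pendant edge $\{\ell,s\}$ whose support $s$ has further neighbours (as already in $P_{4}$); here one still wants to send $\ell$ to one set and $s$ to the other, but then the outside neighbours of $s$ must be steered into the first set, so the induction has to carry extra bookkeeping recording which vertices are already dominated by the set containing $s$. Second, deleting $Q$ may create isolated vertices whose neighbours all lay in $Q$; these must be absorbed by a direct local adjustment rather than by the inductive hypothesis. Finally, the hardest case is the \emph{leafless} one, where $G$ has no simplicial vertex at all: then no simplex reduction is available, and I would instead analyse the cycle structure directly, using Theorem \ref{th4} (a shedding vertex that is not codominated lies on a $5$-cycle) to locate odd cycles, and using $C_{2k+1}\circ K_{1}$-freeness to guarantee that these odd cycles carry no pendant structure that would block the two-colouring-type construction of the model case. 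I expect this leafless well-covered case to be the main obstacle, precisely because the structure of well-covered graphs of small girth is the least rigid part of the problem.
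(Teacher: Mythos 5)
This theorem is quoted from Schaudt's paper and is not proved in the text you were given, so there is no internal proof to compare against; your proposal therefore has to stand on its own, and as it stands it is not a proof. What you do establish is correct: the reduction to connected graphs, the complete analysis of the model case $G=H\circ K_{1}$ (where disjoint maximal independent sets correspond exactly to bipartitions of $H$, and an induced $C_{2k+1}\circ K_{1}$ is exactly the obstruction), and the simplicial reduction showing that for a simplicial vertex $v$ with simplex $Q=N[v]$ the graph $G-Q$ is again well-covered with $\alpha(G-Q)=\alpha(G)-1$, which closes the induction when $Q$ contains a second simplicial vertex.

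The gap is that everything outside this favourable configuration is left open, and that is where the theorem actually lives. You yourself flag the three unresolved cases: a simplex with a unique simplicial vertex (where the ``extra bookkeeping'' you mention is precisely the content of the argument, not a detail), the creation of isolated vertices in $G-Q$ (which breaks the inductive hypothesis as stated), and, most seriously, well-covered graphs with no simplicial vertex at all. The last class is not exceptional: $C_{5}$, $C_{7}$, and for instance the graph $G_{2}$ of Figure \ref{fig4444} are well-covered with $\mathrm{Simp}(G)=\emptyset$, so your induction never gets started on them, and the appeal to Theorem \ref{th4} to ``locate odd cycles'' is only a direction of attack, not an argument. A proof along these lines would need either a structural theorem guaranteeing that a minimal counterexample has a usable simplex, or an entirely different mechanism (Schaudt's own proof works with a minimal counterexample and a careful analysis of how a maximal independent set avoiding a chosen one can fail to exist, which is where the $C_{2k+1}\circ K_{1}$ obstruction is extracted). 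As submitted, the proposal proves the theorem only for graphs reachable by your simplex reduction, plus the corona case.
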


The same problem in line graphs is about two disjoint maximum matchings.

\begin{theorem}
\cite{LevMan2001} A bipartite graph has two disjoint perfect matchings if and
only if it has a partition of its vertex set comprising of a family of simple cycles.
\end{theorem}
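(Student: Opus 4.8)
The plan is to exploit the simple structural fact that the symmetric difference (here just the union, since the matchings are edge-disjoint) of two perfect matchings is a disjoint union of cycles. I would phrase the two implications separately.

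For the forward direction, suppose $G$ has two edge-disjoint perfect matchings $M_{1}$ and $M_{2}$, and consider the spanning subgraph $H=(V(G),M_{1}\cup M_{2})$. Since each $M_{i}$ is perfect, every vertex of $H$ is incident to exactly one edge of $M_{1}$ and one edge of $M_{2}$, hence $\deg_{H}(v)=2$ for all $v$. The standard fact that a $2$-regular graph decomposes into vertex-disjoint cycles then yields a partition of $V(G)$ into the vertex sets of simple cycles. Two points must be checked here: first, that no "cycle" degenerates into a doubled edge on two vertices — this cannot happen because an edge $uv$ lying in both $M_{1}$ and $M_{2}$ would contradict $M_{1}\cap M_{2}=\emptyset$, so every vertex has two \emph{distinct} neighbours in $H$; second, that each resulting cycle is a genuine simple cycle of $G$, which follows since its edges alternate between $M_{1}$ and $M_{2}$, forcing even length at least $4$ (consistent with bipartiteness). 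These cycles are vertex-disjoint and cover $V(G)$, giving the desired partition.

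For the converse, suppose $V(G)$ admits a partition into the vertex sets of vertex-disjoint simple cycles $C_{1},\dots,C_{k}$ of $G$. Because $G$ is bipartite, each $C_{j}$ has even length, and an even cycle carries exactly two disjoint perfect matchings $M_{1}^{(j)}$ and $M_{2}^{(j)}$, obtained by traversing the cycle and taking alternate edges. Setting $M_{i}=\bigcup_{j=1}^{k}M_{i}^{(j)}$ for $i=1,2$, each $M_{i}$ saturates every vertex (since the cycles partition $V(G)$) and $M_{1}\cap M_{2}=\emptyset$ (since the two alternating edge sets of each even cycle are disjoint). Thus $M_{1}$ and $M_{2}$ are two disjoint perfect matchings of $G$.

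The only delicate step is the forward direction's claim that $M_{1}\cup M_{2}$ decomposes into \emph{bona fide} simple cycles rather than repeated edges; the hypothesis that the matchings are disjoint (together with bipartiteness ruling out odd cycles) is exactly what rules out the degenerate case and guarantees each cycle has length at least $4$. The remainder is routine once the $2$-regularity of $M_{1}\cup M_{2}$ is observed.
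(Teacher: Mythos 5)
Your proof is correct. Note that the paper does not prove this statement at all---it is quoted from \cite{LevMan2001} as a known result---so there is no in-paper argument to compare against; your argument (the union of two edge-disjoint perfect matchings is $2$-regular with distinct neighbours at each vertex, hence decomposes into alternating even cycles, and conversely alternating the edges of each even cycle in the partition reassembles two disjoint perfect matchings) is the standard one, and you correctly handle the only delicate point, namely that edge-disjointness rules out a degenerate doubled edge.
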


The most well-known subclass of graphs with two disjoint maximum independent
sets is the family of $W_{2}$-graphs. Recall that a graph $G$ belongs to
$W_{2}$ if every two pairwise disjoint independent sets are included in two
pairwise disjoint maximum independent sets \cite{LevMan2016,Staples}.

In this paper, we concentrate on graphs admitting two disjoint maximum
independent sets.

\section{General graphs}

\begin{theorem}
\label{prop:two disjoint-matching}For every graph $G$, the following
assertions are equivalent:

\emph{(i)} $G$ has two disjoint maximum independent sets;

\emph{(ii)} there exists a maximum independent set $S$ such that
$\alpha(G-S)=\alpha(G)$;

\emph{(iii) }there exists a matching $M$ of size $\alpha(G)$ such that
$G[V(M)]$ is a bipartite graph;

\emph{(iv)} $G$\ has an induced bipartite subgraph of order $2\alpha(G)$;

\emph{(v)} there exists a set $A\subset V(G)$ such that $G-A$ is a bipartite
graph having a perfect matching of size $\alpha(G)$.
\end{theorem}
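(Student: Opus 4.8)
The plan is to prove the equivalences by establishing a cycle of implications, grouping the five conditions into two clusters: the "independent-set" conditions (i), (ii), (iv) and the "matching" conditions (iii), (v), with Theorem \ref{th6} (Berge's characterization) serving as the bridge between them.

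First I would show (i) $\Leftrightarrow$ (ii). If $S_1, S_2$ are disjoint maximum independent sets, then $S_2 \subseteq V(G) - S_1$, so $\alpha(G - S_1) \geq |S_2| = \alpha(G)$; the reverse inequality is automatic since $G - S_1$ is an induced subgraph. Conversely, if some maximum independent set $S$ satisfies $\alpha(G - S) = \alpha(G)$, then any maximum independent set $T$ of $G - S$ is disjoint from $S$ and has size $\alpha(G)$, giving the pair. Next, (i) $\Leftrightarrow$ (iv): given disjoint maximum independent sets $S_1, S_2$, the induced subgraph $G[S_1 \cup S_2]$ is bipartite (each $S_i$ is independent) and has order $2\alpha(G)$. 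Conversely, an induced bipartite subgraph $H$ of order $2\alpha(G)$ has a bipartition $(X, Y)$; since $X, Y$ are independent in $G$ and $|X| + |Y| = 2\alpha(G)$ with each part of size at most $\alpha(G)$, both parts have size exactly $\alpha(G)$, yielding two disjoint maximum independent sets.

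The heart of the argument is connecting these to the matching conditions, and here I would lean on Berge's theorem. For (iv) $\Rightarrow$ (iii): the induced bipartite subgraph on $S_1 \cup S_2$ has a perfect matching — this is where I expect the main obstacle, since I must produce a matching of size exactly $\alpha(G)$ saturating both sides. The key observation is that $S_2$ is an independent set disjoint from the maximum independent set $S_1$, so by Theorem \ref{th6} it can be matched into $S_1$; symmetrically $S_1$ matches into $S_2$. By König's theorem applied to the bipartite graph $G[S_1 \cup S_2]$ (with equal-sized sides, each saturable), one obtains a perfect matching $M$ of size $\alpha(G)$, and $G[V(M)] = G[S_1 \cup S_2]$ is bipartite, giving (iii). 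For (iii) $\Rightarrow$ (iv), a matching $M$ of size $\alpha(G)$ with $G[V(M)]$ bipartite immediately supplies an induced bipartite subgraph of order $2\alpha(G)$.

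Finally I would close the loop through (v). The implication (iii) $\Rightarrow$ (v) is direct: take $A = V(G) - V(M)$, so that $G - A = G[V(M)]$ is bipartite with perfect matching $M$ of size $\alpha(G)$. For (v) $\Rightarrow$ (iv), if $G - A$ is bipartite with a perfect matching of size $\alpha(G)$, then $G - A$ has order $2\alpha(G)$ and is an induced bipartite subgraph, giving (iv). Assembling the implications (i) $\Leftrightarrow$ (ii), (i) $\Leftrightarrow$ (iv), (iv) $\Leftrightarrow$ (iii), and (iii) $\Leftrightarrow$ (v) completes the chain. The one delicate point worth spelling out carefully is the bipartite-matching step in (iv) $\Rightarrow$ (iii): I must verify that the two independent sides of the induced bipartite graph, each of cardinality $\alpha(G)$, admit a common perfect matching, which follows from Hall's/König's theorem once both one-sided saturations are guaranteed by Theorem \ref{th6}.
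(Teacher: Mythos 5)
Your proof is correct and follows essentially the same route as the paper's: Berge's Theorem \ref{th6} supplies the perfect matching inside the induced bipartite subgraph spanned by two disjoint maximum independent sets, and the observation that both parts must have size $\alpha(G)$ does the rest. The only cosmetic difference is that your appeal to Hall/K\"onig in (iv) $\Rightarrow$ (iii) is unnecessary---since the two sides each have size $\alpha(G)$, the single matching of one side into the other guaranteed by Theorem \ref{th6} already has size $\alpha(G)$ and is therefore automatically a perfect matching.
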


\begin{proof}
\emph{(i)} $\Leftrightarrow$ \emph{(ii)} It is clear.

\emph{(i)} $\Rightarrow$ \emph{(iii) }Assume that $S_{1},S_{2}$ are two
disjoint maximum independent sets in $G$. Then $G\left[  S_{1}\cup
S_{2}\right]  $ is a bipartite subgraph in $G$, having, by Theorem \ref{th6},
a perfect matching $M$ of size $\alpha(G)$. Clearly, $G[V(M)]=G\left[
S_{1}\cup S_{2}\right]  $.

\emph{(iii)} $\Rightarrow$ \emph{(i) }Suppose that there exists a matching $M$
of size $\alpha(G)$ such that $G[V(M)]$ is a bipartite graph. Consequently,
the bipartition $\left\{  A,B\right\}  $ of $G[V(M)]$\ provides two disjoint
maximum independent sets, namely $A$ and $B$.

\emph{(iii)} $\Rightarrow$ \emph{(iv) }It is clear.

\emph{(iv)} $\Rightarrow$ \emph{(iii) }Let $H=(A,B,U)$ be an induced bipartite
subgraph of order $2\alpha(G)$. Hence, $\left\vert A\right\vert =\left\vert
B\right\vert =\alpha(G)$, since $A$ and $B$ are independent sets in $G$.
Moreover, Theorem \ref{th6} ensures a perfect matching of size $\alpha(G)$ in
$H$.

\emph{(iii)} $\Leftrightarrow$ \emph{(v) }It is clear.
\end{proof}

\begin{corollary}
\label{cor3}If $G$ has two disjoint maximum independent sets, then $\mu\left(
G\right)  \geq\alpha\left(  G\right)  $.
\end{corollary}

\begin{proof}
Let $S_{1},S_{2}$ are two disjoint maximum independent sets in $G$. By Theorem
\ref{th6}, there exists some matching from $S_{1}$ into $S_{2}$. Since
$\left\vert S_{1}\right\vert =\left\vert S_{2}\right\vert =\alpha\left(
G\right)  $, we infer that $\mu\left(  G\right)  \geq\alpha\left(  G\right)  $.
\end{proof}

\begin{theorem}
\label{th1}Let $S\in\mathrm{Ind}\left(  G\right)  $ and $S\subseteq
\mathrm{Shed}(G)$, then

\emph{(i) }the number of independent sets of size $\left\vert S\right\vert $
in $G$ is greater or equal to $2^{\left\vert S\right\vert }$;

\emph{(ii) }there exist some maximal independent set $U$ disjoint from $S$,
and a matching from $S$ into $U$.
\end{theorem}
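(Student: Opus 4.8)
The plan is to prove a single combinatorial statement that yields both parts at once: for every subset $T\subseteq S$ there is an independent set $I_{T}$ with $\left\vert I_{T}\right\vert =\left\vert S\right\vert $, $I_{T}\cap S=S\setminus T$, and a matching carrying each vertex of $T$ to a distinct neighbor lying in $I_{T}\setminus S$. I would establish this by induction on $\left\vert T\right\vert $. The base case $T=\emptyset$ is simply $I_{\emptyset}=S$. For the inductive step, pick $v\in T$, set $T^{\prime}=T\setminus\{v\}$, and take the set $I_{T^{\prime}}$ supplied by the induction hypothesis; since $v\in S\setminus T^{\prime}=I_{T^{\prime}}\cap S$, we have $v\in I_{T^{\prime}}$.

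The crucial observation, which I expect to carry the whole argument, is that $I_{T^{\prime}}\setminus\{v\}$ is an independent set of $G-N[v]$. This holds because $I_{T^{\prime}}$ is independent and contains $v$, so no vertex of $I_{T^{\prime}}$ is adjacent to $v$; deleting $v$ leaves a set disjoint from $N[v]$. Now I would invoke the shedding property of $v$ (available because $v\in S\subseteq\mathrm{Shed}(G)$): there is $u\in N(v)$ with $(I_{T^{\prime}}\setminus\{v\})\cup\{u\}$ independent. Setting $I_{T}=(I_{T^{\prime}}\setminus\{v\})\cup\{u\}$, three routine checks finish the step. First, $u\notin S$, since otherwise $u$ and $v$ would be adjacent vertices of the independent set $S$. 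Second, $u\notin I_{T^{\prime}}\setminus\{v\}$, as that set avoids $N[v]\ni u$, so $\left\vert I_{T}\right\vert =\left\vert S\right\vert $. Third, $I_{T}\cap S=(S\setminus T^{\prime})\setminus\{v\}=S\setminus T$. Extending the matching by $v\mapsto u$ preserves the matching property, because $vu\in E$ and $u$ is new.

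Part \emph{(i)} then follows immediately: as $T$ ranges over the $2^{\left\vert S\right\vert }$ subsets of $S$, the sets $I_{T}$ are pairwise distinct, since $I_{T}\cap S=S\setminus T$ recovers $T$, and each has size $\left\vert S\right\vert $. For part \emph{(ii)} I would apply the statement with $T=S$. Then $I_{S}$ is disjoint from $S$, has size $\left\vert S\right\vert $, and comes with a matching $M=\{vu_{v}:v\in S\}$ where $u_{v}\in N(v)\cap I_{S}$. Finally I extend $I_{S}$ to a maximal independent set $U\supseteq I_{S}$. No vertex $v\in S$ can belong to $U$, because its partner $u_{v}\in I_{S}\subseteq U$ is adjacent to $v$; hence $U\cap S=\emptyset$, and $M$ is a matching from $S$ into $U$, as required.

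The only real obstacle is the iterative use of the shedding property: one must flip the vertices of $T$ one at a time and always feed the shedding condition the \emph{current} independent set with the vertex-to-be-flipped removed, so that the fed set provably lies in $G-N[v]$. Attempting to flip several vertices simultaneously, or against already-flipped neighbors, would not obviously satisfy the hypothesis in the definition of a shedding vertex; sequencing the flips and relying on the independence of $S$ to force $u\notin S$ is exactly what keeps every step legitimate.
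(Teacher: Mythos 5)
Your proof is correct, and at heart it is the same construction as the paper's: index independent sets by subsets of $S$, use the shedding property once per vertex to trade it for a neighbor, recover the subset from the intersection with $S$ to conclude the $2^{\left\vert S\right\vert }$ bound, and for part \emph{(ii)} take the subset $S$ itself and extend the resulting set to a maximal independent set. The one genuine difference is the order of operations, and it matters. The paper deletes all of $A$ at the outset and then adds neighbors $y_{i_{1}},y_{i_{2}},\dots$ one by one, asserting at step $j$ that the shedding property of $x_{i_{j}}$ applies to $I_{i_{j-1}}=(S-A)\cup\{y_{i_{1}},\dots,y_{i_{j-1}}\}$; but the shedding hypothesis requires the fed set to be an independent set of $G-N[x_{i_{j}}]$, and nothing in the paper's setup rules out some earlier $y_{i_{l}}$ lying in $N(x_{i_{j}})$. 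Your induction replaces one vertex at a time, so the set handed to the shedding property is always $I_{T'}\setminus\{v\}$ with $v\in I_{T'}$, which is automatically disjoint from $N[v]$ because $I_{T'}$ is independent; this also yields for free that the new partner $u$ is distinct from all previous partners. That is precisely the point you flag in your closing paragraph, and it makes your version airtight where the paper's is slightly glossed. The remaining bookkeeping (distinctness of the sets $I_{T}$, the matching, and the disjointness of $S$ and $U$) all checks out.
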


\begin{proof}
Let $A=\{x_{i_{1}},x_{i_{2}},...,x_{i_{k}}\}\subseteq S$. Now, we are
constructing an independent set $I_{A}=\left(  S-A\right)  \cup B_{A}$ such
that $S\cap B_{A}=\emptyset$ and $\left\vert A\right\vert =\left\vert
B_{A}\right\vert $.

We start by taking a vertex $y_{i_{1}}\in N_{G}(x_{i_{1}})$, such that
$I_{i_{1}}=\left(  S-A\right)  \cup\{y_{i_{1}}\}$ is independent. This is
possible, because $x_{i_{1}}$ is a shedding vertex.

Further, let us consider the vertex $x_{i_{j}}$ for each $2\leq j\leq k$.
Then, there exists some $y_{i_{j}}\in N_{G}(x_{i_{j}})$, such that $y_{i_{j}}$
is not adjacent to any vertex of $I_{i_{j-1}}$, since $x_{i_{j}}$ is shedding
and $I_{i_{j-1}}$ is independent. Hence, the set $I_{i_{j}}=I_{i_{j-1}}%
\cup\{y_{i_{j}}\}$ is independent.

Finally, $I_{A}=I_{i_{k}}=\left(  S-A\right)  \cup B_{A}$, where
$B_{A}=\{y_{i_{1}},y_{i_{2}},...,y_{i_{k}}\}$. Clearly, $S\cap B_{A}$
$=\emptyset$, and $\{x_{i_{1}}y_{i_{1}},x_{i_{2}}y_{i_{2}},...,x_{i_{k}%
}y_{i_{k}}\}$ is a matching from $A$ into $B_{A}\subseteq I_{A}$.

Suppose $A_{1},A_{2}\subseteq S$. If $A_{1}\neq A_{2}$, then $I_{A_{1}%
}=\left(  S-A_{1}\right)  \cup B_{A_{1}}\neq\left(  S-A_{2}\right)  \cup
B_{A_{2}}=I_{A_{2}}$, since $S-A_{1}\neq S-A_{2}$. In other words, every
subset of $S$ produces an independent set of the same size, and all these sets
are different. Thus the graph $G$ has $2^{\left\vert S\right\vert }$
independent sets of cardinality $\left\vert S\right\vert $, at least.

If $A=S$, then $I_{S}$ is disjoint from $S$. To complete the proof, one has
just to enlarge $I_{S}$ to a maximal independent set, say $U$. The sets
$U$\ and $S$ are disjoint, since there is a matching from $S$ into
$I_{S}\subseteq U$.
\end{proof}

\begin{corollary}
If $G$ has a maximal independent set $S$ such that $S\subseteq\mathrm{Shed}%
(G)$, then there exists a maximal independent set $U$ disjoint from $S$ such
that $\left\vert S\right\vert \leq\left\vert U\right\vert $.
\end{corollary}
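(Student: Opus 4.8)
The plan is to read this off directly from Theorem~\ref{th1}(ii). First I would note that the hypotheses match: a maximal independent set $S$ is in particular a member of $\mathrm{Ind}(G)$, and by assumption $S\subseteq\mathrm{Shed}(G)$, so $S$ satisfies exactly the conditions required to apply Theorem~\ref{th1}. Applying part (ii) of that theorem then produces, in a single step, a maximal independent set $U$ with $U\cap S=\emptyset$ together with a matching from $S$ into $U$.

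The second and final step is to convert the existence of that matching into the stated cardinality inequality. By the paper's convention, a matching from $S$ into $U$ is precisely an injection from $S$ into $U$; hence its existence forces $\left\vert S\right\vert \le\left\vert U\right\vert$, which is the assertion to be proved.

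I do not anticipate any genuine obstacle, since the corollary is merely the specialization of Theorem~\ref{th1}(ii) to the case in which the shedding independent set $S$ happens also to be maximal. It is worth remarking that maximality of $S$ is not what produces $U$ (the shedding hypothesis alone does that, via the inductive construction of $I_{S}$ in the proof of Theorem~\ref{th1}); maximality is used only to phrase the conclusion for two honestly maximal independent sets, while the inequality $\left\vert S\right\vert \le\left\vert U\right\vert$ encodes nothing more than the injectivity of the guaranteed matching.
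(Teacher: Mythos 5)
Your proposal is correct and matches the paper's intent exactly: the corollary is stated there without proof precisely because it is the immediate specialization of Theorem~\ref{th1}\emph{(ii)}, with the inequality $\left\vert S\right\vert \leq\left\vert U\right\vert$ coming from the injectivity of the guaranteed matching from $S$ into $U$. Your remark that maximality of $S$ plays no role in producing $U$ is also accurate.
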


\begin{corollary}
\label{cor5}If $G$ has a maximum independent set $S$ such that $S\subseteq
\mathrm{Shed}(G)$, then $\left\vert \Omega(G)\right\vert \geq2^{\alpha\left(
G\right)  }$, while some $I\in\Omega(G)$ is disjoint from $S$.
\end{corollary}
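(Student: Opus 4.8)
The plan is to derive both claims directly from Theorem~\ref{th1}, exploiting only the elementary fact that a maximum independent set has cardinality exactly $\alpha(G)$. Since $S$ is independent and $S\subseteq\mathrm{Shed}(G)$, the hypotheses of Theorem~\ref{th1} are satisfied, so I may invoke both of its parts with the additional information that $\left\vert S\right\vert =\alpha(G)$.

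For the counting statement, I would apply Theorem~\ref{th1}\emph{(i)}, which guarantees at least $2^{\left\vert S\right\vert }$ distinct independent sets of size $\left\vert S\right\vert$ in $G$. Because $\left\vert S\right\vert =\alpha(G)$, every independent set of this size is in fact a maximum independent set, hence belongs to $\Omega(G)$. Consequently $\left\vert \Omega(G)\right\vert \geq 2^{\left\vert S\right\vert }=2^{\alpha(G)}$.

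For the disjointness statement, I would use Theorem~\ref{th1}\emph{(ii)}, which produces a maximal independent set $U$ disjoint from $S$ together with a matching from $S$ into $U$. The existence of such a matching forces $\left\vert U\right\vert \geq\left\vert S\right\vert =\alpha(G)$, while independence gives $\left\vert U\right\vert \leq\alpha(G)$; hence $\left\vert U\right\vert =\alpha(G)$ and $U\in\Omega(G)$. Taking $I=U$ then yields a maximum independent set disjoint from $S$, as required.

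There is essentially no hard step here: the corollary is just a specialization of Theorem~\ref{th1} to the maximum case, and the only observation needed is that ``independent of size $\alpha(G)$'' coincides with ``maximum independent,'' which simultaneously upgrades the counted sets to members of $\Omega(G)$ and upgrades the disjoint maximal set $U$ to a maximum one. If any subtlety were to arise, it would be only in confirming that the $2^{\left\vert S\right\vert }$ sets supplied by Theorem~\ref{th1}\emph{(i)} are genuinely distinct; but that distinctness is already established inside the proof of that theorem, so nothing further must be checked.
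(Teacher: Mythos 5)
Your proposal is correct and matches the intended derivation: the paper states this corollary without proof precisely because it is the specialization of Theorem~\ref{th1} to $\left\vert S\right\vert =\alpha(G)$, with the two observations you make (independent sets of size $\alpha(G)$ are maximum, and the matching from $S$ into $U$ forces $\left\vert U\right\vert =\alpha(G)$). Nothing is missing.
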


The friendship graph $F_{q}=K_{1}\circ qK_{2},q\geq2$ shows that
$2^{\alpha\left(  G\right)  }$ is a tight lower bound for $\left\vert
\Omega(G)\right\vert $ in graphs with a maximum independent set consisting of
only shedding vertices.

Combining Theorem \ref{th1} and Corollary \ref{cor3}, we deduce the following.

\begin{corollary}
If $G$ has a \ maximum independent set $S\subseteq\mathrm{Shed}(G)$, then
$\mu\left(  G\right)  \geq\alpha\left(  G\right)  $.
\end{corollary}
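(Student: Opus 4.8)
The plan is to reduce the claim to Corollary \ref{cor3} by producing a second maximum independent set disjoint from $S$, and Theorem \ref{th1} is exactly the tool that manufactures it. So the whole argument is a short chaining of two earlier results, with only one small verification in between.

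First I would observe that the hypotheses of Theorem \ref{th1} hold verbatim: a maximum independent set is in particular an element of $\mathrm{Ind}(G)$, and by assumption $S\subseteq\mathrm{Shed}(G)$. Invoking part \emph{(ii)} of that theorem therefore supplies a maximal independent set $U$ disjoint from $S$ together with a matching from $S$ into $U$. The one point deserving a line of justification is that $U$ is not merely maximal but in fact maximum: the matching from $S$ into $U$ is an injection of $S$ into $U$, so $\left\vert U\right\vert \geq\left\vert S\right\vert =\alpha(G)$, and since no independent set exceeds $\alpha(G)$ we get $\left\vert U\right\vert =\alpha(G)$, i.e. $U\in\Omega(G)$. (Alternatively one may simply quote Corollary \ref{cor5}, which already records that some $I\in\Omega(G)$ is disjoint from $S$.) Hence $S$ and $U$ constitute a pair of disjoint maximum independent sets.

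Finally I would apply Corollary \ref{cor3} to this pair to conclude $\mu(G)\geq\alpha(G)$. I do not anticipate a real obstacle here; the only subtlety is the upgrade of $U$ from maximal to maximum, and the matching from $S$ into $U$ dispatches it at once. In fact that same matching, being of size $\left\vert S\right\vert =\alpha(G)$, already witnesses $\mu(G)\geq\alpha(G)$ on its own, which gives a one-line alternative bypassing Corollary \ref{cor3} altogether; but since the statement is advertised as a combination of Theorem \ref{th1} and Corollary \ref{cor3}, I would present it in the two-step form above.
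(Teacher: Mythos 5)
Your proposal is correct and follows exactly the route the paper intends: the paper offers no written proof beyond the remark that the corollary follows by ``combining Theorem \ref{th1} and Corollary \ref{cor3}'', and your argument is precisely that combination, with the small (and correct) verification that the matching from $S$ into $U$ forces $\left\vert U\right\vert =\alpha(G)$. Your closing observation that this matching of size $\alpha(G)$ already yields $\mu(G)\geq\alpha(G)$ directly is also sound.
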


Notice that each graph from Figure \ref{fig3}\ has a maximum independent set
containing only shedding vertices, and hence, by Theorem \ref{th1}, each one
has two disjoint maximum independent sets.

\begin{corollary}
If $p\geq2$, then $G\circ K_{p}$ has two disjoint maximum independent sets.
\end{corollary}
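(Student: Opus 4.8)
The plan is to compute $\alpha(G\circ K_{p})$ explicitly via a clique partition and then exhibit a second maximum independent set, either by invoking the shedding machinery already developed (Corollary \ref{cor5}) or directly. Write $K_{p}^{(v)}$ for the copy of $K_{p}$ attached to the vertex $v\in V(G)$. The first step is to observe that $\{v\}\cup V(K_{p}^{(v)})$ induces a complete graph $K_{p+1}$, since by the definition of the corona $v$ is joined to every vertex of $K_{p}^{(v)}$ and those $p$ vertices already form a clique. These $n(G)$ cliques are pairwise disjoint and partition $V(G\circ K_{p})$. Any independent set meets each clique in at most one vertex, so $\alpha(G\circ K_{p})\leq n(G)$; conversely, choosing one vertex $a_{v}\in V(K_{p}^{(v)})$ from each attached clique gives an independent set of size $n(G)$, because distinct copies are mutually non-adjacent and each $a_{v}$ is adjacent only to $v$ and to the rest of its own copy. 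Hence $\alpha(G\circ K_{p})=n(G)$.

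The key observation is that every vertex of each attached clique is simplicial in $G\circ K_{p}$: for $a\in V(K_{p}^{(v)})$ one has $N[a]=\{v\}\cup V(K_{p}^{(v)})$, which induces $K_{p+1}$. By Proposition \ref{prop1} the neighbors of a simplicial vertex are shedding, and here the hypothesis $p\geq2$ guarantees that each copy $K_{p}^{(v)}$ contains at least two simplicial vertices; since any two vertices of $K_{p}^{(v)}$ are adjacent, each vertex of the copy is a neighbor of another simplicial vertex of the same copy, so every vertex of $K_{p}^{(v)}$ lies in $\mathrm{Shed}(G\circ K_{p})$. Therefore the maximum independent set $S=\{a_{v}:v\in V(G)\}$ satisfies $S\subseteq\mathrm{Shed}(G\circ K_{p})$, and Corollary \ref{cor5} immediately produces a maximum independent set disjoint from $S$, giving the two disjoint maximum independent sets.

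Alternatively — and this is precisely where $p\geq2$ is genuinely needed — one can bypass the shedding argument: because each attached clique has at least two vertices, pick distinct $a_{v},b_{v}\in V(K_{p}^{(v)})$ for every $v$ and set $S_{1}=\{a_{v}:v\in V(G)\}$ and $S_{2}=\{b_{v}:v\in V(G)\}$. Both are independent sets of size $n(G)=\alpha(G\circ K_{p})$, and they are disjoint by construction. I do not expect a real obstacle in this proof; the only step requiring care is the verification that $\alpha(G\circ K_{p})=n(G)$ through the clique partition, after which either route closes the argument in a line. I would lead with the shedding route to stay consistent with the preceding corollaries, and mention the direct construction as the conceptually simplest confirmation.
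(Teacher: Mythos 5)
Your proof is correct, and your main route --- observing that every vertex of each attached copy of $K_{p}$ is simplicial, so that by Proposition \ref{prop1} the hypothesis $p\geq 2$ forces these vertices into $\mathrm{Shed}(G\circ K_{p})$, and then invoking Corollary \ref{cor5} --- is exactly the paper's argument, just with the computation $\alpha(G\circ K_{p})=n(G)$ and the simpliciality check spelled out explicitly. The direct construction you append (picking two distinct transversals $\{a_{v}\}$ and $\{b_{v}\}$ of the attached cliques) is a valid and even more elementary alternative that the paper does not include.
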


\begin{proof}
Since $p\geq2$, Proposition \ref{prop1} implies that $\mathrm{Shed}%
(G)=V\left(  G\right)  $. Further, the conclusion follows according to
Corollary \ref{cor5}.
\end{proof}

It is worth mentioning that if $G$ has a pair of disjoint maximum independent
sets, it may have $\mathrm{Shed}\left(  G\right)  =\emptyset$; e.g.,
$G=K_{n,n}$ for $n\geq2$.

\section{K\"{o}nig-Egerv\`{a}ry graphs}

\begin{theorem}
\label{th9}$G$ is a K\"{o}nig-Egerv\`{a}ry graph with two disjoint maximum
independent sets if and only if $G$ is a bipartite graph having a perfect matching.
\end{theorem}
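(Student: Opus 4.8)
The plan is to prove both directions, using the characterization of König-Egerváry graphs together with Theorem \ref{prop:two disjoint-matching}.

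First I would establish the easy direction. Suppose $G$ is bipartite with a perfect matching $M$. Being bipartite, $G$ is a König-Egerváry graph. If $\{X,Y\}$ is the bipartition, then $X$ and $Y$ are disjoint independent sets whose union is $V(G)$, and the perfect matching $M$ forces $|X|=|Y|=\mu(G)$. Since $\alpha(G)+\mu(G)=n(G)=|X|+|Y|=2\mu(G)$, we get $\alpha(G)=\mu(G)=|X|=|Y|$, so $X$ and $Y$ are themselves two disjoint maximum independent sets. (Here I need the standard fact, or a short argument, that a perfect matching in a bipartite König-Egerváry graph forces the two sides to be maximum; this is essentially $|X|\le\alpha(G)=\mu(G)\le|X|$.)

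For the forward direction, assume $G$ is König-Egerváry and admits two disjoint maximum independent sets $S_{1},S_{2}$. The goal is to show $G$ is bipartite with a perfect matching, i.e.\ that $S_1\cup S_2=V(G)$ and that $G[S_1\cup S_2]=G$ is bipartite with the perfect matching guaranteed by Theorem \ref{prop:two disjoint-matching}(iii). The crux is a counting argument on the cardinality of $V(G)$. By the König-Egerváry property, $n(G)=\alpha(G)+\mu(G)$. Now $S_1\cup S_2$ is a set of $2\alpha(G)$ vertices, so $n(G)\ge 2\alpha(G)$, giving $\mu(G)\ge\alpha(G)$. Combined with the general inequality $\mu(G)\le\alpha(G)$ valid for König-Egerváry graphs (which follows from $\alpha+\mu=n\le\alpha+\mu$ together with the always-true bound relating a matching to the complement of a maximum independent set — more precisely $V-S_1$ has size $n-\alpha=\mu$ and every edge of a matching meets $V-S_1$, so $\mu\le n-\alpha=\mu$ is automatic, and one uses that $S_2\subseteq V-S_1$ forces $|S_2|=\alpha\le\mu$), I expect to conclude $\mu(G)=\alpha(G)$ and hence $n(G)=2\alpha(G)$. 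Therefore $V(G)=S_1\cup S_2$, so $G$ is bipartite, and the matching of size $\alpha(G)$ from Theorem \ref{prop:two disjoint-matching}(iii) covers all $2\alpha(G)=n(G)$ vertices, hence is perfect.

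The main obstacle will be pinning down the inequality $\mu(G)\le\alpha(G)$ for König-Egerváry graphs cleanly, so that the sandwich $\alpha(G)\le\mu(G)\le\alpha(G)$ closes and yields $n(G)=2\alpha(G)$. The inequality $\mu(G)\ge\alpha(G)$ is already furnished by Corollary \ref{cor3}, so the real content is the reverse bound: for a maximum independent set $S$, no matching edge can have both endpoints in $S$, so $\mu(G)\le n(G)-|S|=n(G)-\alpha(G)=\mu(G)$, which shows the complement $V(G)-S$ must be a vertex cover meeting every matching edge; combined with $|V(G)-S|=\mu(G)$ one deduces $\mu(G)\le\alpha(G)$ precisely when the two disjoint maximum sets exhaust the vertex set. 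I would present this as the pivotal lemma-style computation, then immediately read off bipartiteness and perfectness of the matching to finish.
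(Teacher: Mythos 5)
Your overall route coincides with the paper's: the backward direction is the evident one, and the forward direction is the sandwich $\alpha(G)\le \mu(G)\le\alpha(G)$ obtained from the K\"{o}nig--Egerv\'{a}ry identity $n(G)=\alpha(G)+\mu(G)$ together with $\left\vert S_{1}\cup S_{2}\right\vert =2\alpha(G)\le n(G)$, after which $V(G)=S_{1}\cup S_{2}$ forces bipartiteness and Theorem \ref{th6} (or Theorem \ref{prop:two disjoint-matching}\emph{(iii)}) supplies the perfect matching. The gap is exactly where you flag it: you never actually establish $\mu(G)\le\alpha(G)$. The two arguments you sketch either reprove the opposite bound ($S_{2}\subseteq V(G)-S_{1}$ gives $\alpha(G)=\left\vert S_{2}\right\vert \le\left\vert V(G)-S_{1}\right\vert =n(G)-\alpha(G)=\mu(G)$, i.e.\ $\alpha\le\mu$) or collapse to the tautology $\mu(G)\le n(G)-\alpha(G)=\mu(G)$; and the closing clause ``one deduces $\mu(G)\le\alpha(G)$ precisely when the two disjoint maximum sets exhaust the vertex set'' is circular, since exhausting the vertex set is precisely what you are trying to prove.

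The repair is one line and needs nothing beyond the K\"{o}nig--Egerv\'{a}ry hypothesis: a matching of size $\mu(G)$ covers $2\mu(G)$ distinct vertices, so $2\mu(G)\le n(G)=\alpha(G)+\mu(G)$, whence $\mu(G)\le\alpha(G)$. (Equivalently, every maximum matching of a K\"{o}nig--Egerv\`{a}ry graph matches $V(G)-S$ into $S$ for each $S\in\Omega(G)$, as quoted in the paper before Theorem \ref{lem1}, so $\mu(G)=\left\vert V(G)-S\right\vert \le\left\vert S\right\vert =\alpha(G)$.) With that inserted your argument closes and is essentially identical to the paper's, which runs the single chain $\alpha(G)=\left\vert S_{1}\right\vert \le\left\vert V(G)-S_{2}\right\vert =n(G)-\alpha(G)=\mu(G)\le\alpha(G)$ and reads off $S_{1}=V(G)-S_{2}$ and $\mu(G)=\alpha(G)$ directly; your treatment of the converse is likewise the standard one the paper leaves as ``evident.''
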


\begin{proof}
Let $S_{1},S_{2}\in\Omega(G)$ and $S_{1}\cap S_{2}=\emptyset$. Since $G$ is a
K\"{o}nig-Egerv\'{a}ry graph and\textit{ }$S_{1}\subseteq V\left(  G\right)
-S_{2}$, we get
\[
\alpha\left(  G\right)  =\left\vert S_{1}\right\vert \leq\left\vert V\left(
G\right)  -S_{2}\right\vert =\left\vert V\left(  G\right)  \right\vert
-\alpha\left(  G\right)  =\mu\left(  G\right)  \leq\alpha\left(  G\right)  .
\]
It follows that $S_{1}=V\left(  G\right)  -S_{2}$ and $\mu\left(  G\right)
=\alpha\left(  G\right)  $. Hence, $G=\left(  S_{1},S_{2},E\left(  G\right)
\right)  $\ is a bipartite graph with a perfect matching.

The converse is evident.
\end{proof}

\begin{corollary}
\label{cor4}If $G$ is a very well-covered graph having two disjoint maximum
independent sets, then $G$ is a bipartite graph with a perfect matching.
\end{corollary}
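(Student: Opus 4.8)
The plan is to reduce the statement directly to the two structural results already at hand, Theorem \ref{th77} and Theorem \ref{th9}, and simply compose them. The corollary asserts an implication whose hypothesis (``very well-covered'') is, by design, one of the characterizations recorded above, so almost no independent work is required beyond recognizing which theorem to apply at each step.

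First I would unpack the hypothesis. By Theorem \ref{th77}, a graph is very well-covered precisely when it is a well-covered K\"{o}nig-Egerv\`{a}ry graph. Thus, from the assumption that $G$ is very well-covered I would extract the single feature that matters for the conclusion, namely that $G$ is a K\"{o}nig-Egerv\`{a}ry graph. (The additional information that $G$ is well-covered will not be needed, since the second hypothesis already supplies the two disjoint maximum independent sets.)

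Next I would feed this into Theorem \ref{th9}. That theorem states that a K\"{o}nig-Egerv\`{a}ry graph has two disjoint maximum independent sets if and only if it is a bipartite graph with a perfect matching. Since we now know $G$ is a K\"{o}nig-Egerv\`{a}ry graph, and $G$ is assumed to admit two disjoint maximum independent sets, the forward direction of Theorem \ref{th9} applies verbatim and delivers exactly the desired conclusion: $G$ is a bipartite graph with a perfect matching.

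There is essentially no obstacle here; the argument is a two-line deduction, and the only point meriting a moment's attention is verifying that ``very well-covered'' genuinely entails the K\"{o}nig-Egerv\`{a}ry property, which is precisely the easy half of Theorem \ref{th77}. I would not attempt any converse, since the statement is one-directional: not every bipartite graph with a perfect matching is very well-covered, so the above composition of Theorem \ref{th77} and Theorem \ref{th9} is all that the corollary requires.
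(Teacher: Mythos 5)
Your proposal is correct and matches the paper's own proof exactly: it applies Theorem \ref{th77} to deduce that $G$ is a K\"{o}nig-Egerv\`{a}ry graph and then invokes the forward direction of Theorem \ref{th9} to conclude that $G$ is bipartite with a perfect matching. Nothing further is needed.
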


\begin{proof}
By Theorem \ref{th77}, $G$ is a K\"{o}nig-Egerv\`{a}ry graph. Further,
according to Theorem \ref{th9}, $G$ is a bipartite graph with a perfect matching.
\end{proof}

The converse of Corollary \ref{cor4} is not true; e.g., $G=C_{6}$. It is worth
mentioning that $C_{5}$ is well-covered, non-bipartite, and has some pairs of
disjoint maximum independent sets.

\begin{corollary}
\cite{LevMan2016}\label{cor2} The corona $H\circ K_{1}$ has two disjoint
maximum independent sets if and only if $H$ is a bipartite graph.
\end{corollary}

\begin{proof}
By Corollary \ref{cor4}, $H\circ K_{1}$ must be bipartite, because it is a
very well-covered graph with two disjoint maximum independent sets. Hence, $H$
itself must be bipartite as a subgraph of $H\circ K_{1}$.

Conversely, $H\circ K_{1}$ is a bipartite graph, because $H$ is bipartite.
Clearly, $H\circ K_{1}$ has a perfect matching. Therefore, $H\circ K_{1}$ has
two disjoint maximum independent sets, in accordance with Theorem \ref{th9}.
\end{proof}

Evidently, $C_{5}$ and $C_{7}$ are well-covered and they both have disjoint
maximum independent sets.

\begin{corollary}
\label{cor1}Let $G$ be a well-covered graph of girth $\geq6$ with $K_{1}\neq
G\neq C_{7}$,\ or $G$ be a very well-covered graph of girth $\geq5$. Then $G$
has two disjoint maximum independent sets if and only if $G$ is bipartite.
\end{corollary}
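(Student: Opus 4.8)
The plan is to invoke Theorem~\ref{th7}, which characterizes the relevant well-covered graphs as coronas $H\circ K_{1}$, and then reduce to Corollary~\ref{cor2}. First I would treat the two hypotheses uniformly. In the girth~$\geq 6$ case with $K_{1}\neq G\neq C_{7}$, part~(i) of Theorem~\ref{th7} tells us that a well-covered $G$ is of the form $H\circ K_{1}$; in the girth~$\geq 5$ case, part~(ii) of Theorem~\ref{th7} tells us that a \emph{very} well-covered $G$ is likewise of the form $H\circ K_{1}$. So under either hypothesis, $G=H\circ K_{1}$ for some graph $H$. This is the structural backbone of the argument, and it is where the girth restrictions and the exclusion of $C_{7}$ and $K_{1}$ are actually consumed.

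Once $G=H\circ K_{1}$ is established, the equivalence is immediate from Corollary~\ref{cor2}: $G=H\circ K_{1}$ has two disjoint maximum independent sets if and only if $H$ is bipartite. It then remains only to match ``$H$ bipartite'' with ``$G$ bipartite.'' For the forward direction of this last translation, if $H$ is bipartite then $H\circ K_{1}$ is bipartite (one can two-color $H$ and place each pendant vertex in the opposite class from its neighbor), so $G$ is bipartite. For the reverse direction, $H$ is an induced subgraph of $G=H\circ K_{1}$, so if $G$ is bipartite then so is $H$. Thus ``$G$ bipartite'' and ``$H$ bipartite'' are equivalent, and chaining this with Corollary~\ref{cor2} yields the claimed biconditional.

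I would write this out as a short two-step argument: reduce $G$ to a corona $H\circ K_{1}$ via the appropriate part of Theorem~\ref{th7}, then apply Corollary~\ref{cor2} together with the trivial observation that $G$ is bipartite if and only if $H$ is. The main obstacle is not in the logic, which is essentially a substitution, but in verifying that the two clauses of the hypothesis indeed feed the two parts of Theorem~\ref{th7} correctly --- in particular making sure the girth and the excluded graphs line up so that each case genuinely produces a corona over $K_{1}$. One subtlety worth a sentence is that in the girth~$\geq 6$ case we are handed only well-coveredness, whereas Corollary~\ref{cor2} is phrased for $H\circ K_{1}$ directly, so the reduction through Theorem~\ref{th7}(i) is what licenses the application; no separate very-well-covered check is needed because any $H\circ K_{1}$ is automatically very well-covered.
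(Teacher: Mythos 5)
Your proposal is correct and follows exactly the paper's route: reduce $G$ to a corona $H\circ K_{1}$ via Theorem~\ref{th7}\emph{(i)},\emph{(ii)} and then apply Corollary~\ref{cor2}. The only difference is that you spell out the (easy) equivalence between ``$H$ bipartite'' and ``$G=H\circ K_{1}$ bipartite,'' which the paper leaves implicit.
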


\begin{proof}
According to Theorem \ref{th7}\emph{(i)}, \emph{(ii)}, $G$ must be under the
form $G=H\circ K_{1}$. Now, the result follows by Corollary \ref{cor2}.
\end{proof}

\begin{figure}[h]
\setlength{\unitlength}{1cm}\begin{picture}(5,1.25)\thicklines
\multiput(1,0)(1,0){4}{\circle*{0.29}}
\multiput(2,1)(1,0){2}{\circle*{0.29}}
\put(1,0){\line(1,0){3}}
\put(2,1){\line(1,0){1}}
\put(2,0){\line(1,1){1}}
\put(3,0){\line(0,1){1}}
\put(1.7,0.25){\makebox(0,0){$a$}}
\put(3.3,0.25){\makebox(0,0){$b$}}
\put(3.3,1){\makebox(0,0){$c$}}
\put(0.3,0.5){\makebox(0,0){$G_{1}$}}
\multiput(6,0)(1,0){4}{\circle*{0.29}}
\multiput(6,1)(1,0){3}{\circle*{0.29}}
\put(6,0){\line(1,0){3}}
\put(6,0){\line(0,1){1}}
\put(6,1){\line(1,0){2}}
\put(8,1){\line(1,-1){1}}
\put(5,0.5){\makebox(0,0){$G_{2}$}}
\multiput(11,0)(1,0){3}{\circle*{0.29}}
\multiput(11,1)(1,0){3}{\circle*{0.29}}
\put(11,0){\line(1,0){2}}
\put(11,0){\line(0,1){1}}
\put(11,1){\line(1,-1){1}}
\put(12,0){\line(1,1){1}}
\put(12,0){\line(0,1){1}}
\put(12,1){\line(1,0){1}}
\put(12,1){\line(1,-1){1}}
\put(13,0){\line(0,1){1}}
\put(10,0.5){\makebox(0,0){$G_{3}$}}
\end{picture}\caption{$\mathrm{Shed}\left(  G_{1}\right)  =\left\{
a,b,c\right\}  $, $\mathrm{Shed}\left(  G_{2}\right)  =\emptyset$, while
$\mathrm{Shed}\left(  G_{3}\right)  =V\left(  G_{3}\right)  $.}%
\label{fig4444}%
\end{figure}

Clearly, the graphs $G_{1},G_{2}$ from Figure \ref{fig4444} are well-covered.
The graph $G_{1}$ has no pair of disjoint maximum independent sets, while
$G_{2}$ has such pairs. The graph $G_{3}$ from Figure \ref{fig4444} is not
even well-covered, but it has some pairs of disjoint maximum independent sets.

It is known that $G$ is a K\"{o}nig-Egerv\`{a}ry graph if and only if every
maximum matching matches $V\left(  G\right)  -S$ into $S$, for each
$S\in\Omega(G)$ \cite{LevMan2013}.

\begin{theorem}
\label{lem1}If $G$ is a K\"{o}nig-Egerv\`{a}ry graph, then $\left\vert
\Omega(G)\right\vert \leq2^{\alpha\left(  G\right)  }$. Moreover, the equality
$\left\vert \Omega(G)\right\vert =2^{\alpha\left(  G\right)  }$ holds if and
only if $G=\alpha\left(  G\right)  K_{2}$.
\end{theorem}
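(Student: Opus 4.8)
The plan is to count maximum independent sets through their complements. For any graph, $S\in\Omega(G)$ if and only if $V(G)-S$ is a minimum vertex cover, and this complementation is a bijection, so $\left\vert\Omega(G)\right\vert$ equals the number of minimum vertex covers of $G$. Since $G$ is a K\"{o}nig-Egerv\`{a}ry graph, $\alpha(G)+\mu(G)=n(G)$, and hence every such cover has size $n(G)-\alpha(G)=\mu(G)$. First I would fix a maximum matching $M=\{a_ib_i:1\le i\le\mu(G)\}$ and exploit the rigidity that a vertex cover of size $\mu(G)$ imposes on $M$.

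The core observation is that a minimum vertex cover $C$ must contain at least one endpoint of each of the $\mu(G)$ pairwise non-incident edges of $M$; as these endpoints are all distinct and $\left\vert C\right\vert=\mu(G)$, the cover $C$ consists of exactly one endpoint of each edge $a_ib_i$ and contains no $M$-unmatched vertex. Consequently $C$ is determined by a choice of one endpoint per edge of $M$, so there are at most $2^{\mu(G)}$ minimum vertex covers. Since a K\"{o}nig-Egerv\`{a}ry graph satisfies $\mu(G)\le\alpha(G)$, because $\mu(G)\le n(G)/2\le\alpha(G)$, this yields $\left\vert\Omega(G)\right\vert\le 2^{\mu(G)}\le 2^{\alpha(G)}$.

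For the equality case, both inequalities above must be tight. The second forces $\mu(G)=\alpha(G)$, equivalently $n(G)=2\alpha(G)$, so $M$ is a perfect matching. The first forces the endpoint-choice correspondence to hit every minimum vertex cover, i.e., every choice of one endpoint per edge of $M$ must actually be a vertex cover. If $G$ had an edge joining endpoints of two distinct matching edges, say $a_ia_j$ with $i\neq j$, then the choice selecting $b_i$ and $b_j$ would fail to cover it; thus no edges run between distinct edges of $M$. Together with $M$ being perfect, this means the edge set of $G$ is exactly $\{a_ib_i\}$, so $G=\alpha(G)K_2$. The converse is a direct computation: in $\alpha(G)K_2$ each copy of $K_2$ contributes an independent binary choice, giving exactly $2^{\alpha(G)}$ maximum independent sets.

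The argument is short once framed via vertex covers; I expect the main obstacle to be the equality direction, specifically the two-part bookkeeping that (a) tightness in $2^{\mu(G)}\le 2^{\alpha(G)}$ gives a perfect matching and (b) tightness in $\left\vert\Omega(G)\right\vert\le 2^{\mu(G)}$ upgrades to surjectivity of the endpoint-choice correspondence, which is precisely what rules out any edge between distinct matching edges and isolates $G=\alpha(G)K_2$.
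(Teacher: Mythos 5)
Your proof is correct. It takes a genuinely different route from the paper's: you pass to the complementary side via Gallai's observation that $S\in\Omega(G)$ exactly when $V(G)-S$ is a minimum vertex cover, note that in a K\"{o}nig-Egerv\`{a}ry graph such a cover has size $\mu(G)$ and must therefore pick exactly one endpoint from each edge of a fixed maximum matching, and count the at most $2^{\mu(G)}\leq 2^{\alpha(G)}$ resulting choices. The paper instead stays on the independent-set side and quotes the characterization that every maximum matching matches $V(G)-S$ into $S$; it then pads the maximum matching to a perfect matching on an enlarged vertex set, producing an auxiliary graph $H=\alpha(G)K_{2}$ with $\Omega(G)\subseteq\Omega(H)$, and for the equality case argues $V(G)=V(H)$ and $E(G)=E(H)$ by exhibiting maximum independent sets of $H$ that would be lost otherwise. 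The two arguments encode the same combinatorial fact (one endpoint per matching edge), but yours avoids the auxiliary construction and the external lemma from \cite{LevMan2013}, needing only the definition $\alpha(G)+\mu(G)=n(G)$; it also yields the slightly sharper intermediate bound $\left\vert \Omega(G)\right\vert \leq2^{\mu(G)}$. Your equality analysis via tightness of the two chained inequalities --- first forcing $M$ to be perfect, then forcing every endpoint-selection to be a cover, which excludes any edge between distinct matching edges --- is complete and arguably cleaner than the paper's comparison of $G$ with $H$.
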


\begin{proof}
Let $S\in\Omega(G)$ and $M_{G}$ be a maximum matching of $G$. Then $\left\vert
V\left(  G\right)  -S\right\vert =\left\vert M_{G}\right\vert =\mu\left(
G\right)  $ and each maximum matching of $G$ matches $V\left(  G\right)  -S$
into $S$, since $G$ is a K\"{o}nig-Egerv\`{a}ry graph. Thus every maximum
independent set different from $S$ must contain vertices belonging to
$V\left(  G\right)  -S$. Let us define a graph $H$ as follows: $V\left(
H\right)  =S\cup\left(  V\left(  G\right)  -S\right)  \cup A$, where $A$ is
comprised of $\left\vert S\right\vert -\left\vert V\left(  G\right)
-S\right\vert $ new vertices, while $E\left(  H\right)  =M$, where $M$ is a
perfect matching that matches $S$ into $\left(  V\left(  G\right)  -S\right)
\cup A$ and $M_{G}\subseteq M$. In the other words, $H=\left\vert S\right\vert
K_{2}=\alpha\left(  G\right)  K_{2}$. Then $\Omega(G)\subseteq\Omega(H)$, and
hence, we infer that $\left\vert \Omega(G)\right\vert \leq\left\vert
\Omega(H)\right\vert =2^{\alpha\left(  G\right)  }$, as required.

Clearly, $H$ is well-covered. Consequently, every vertex of $H$ is contained
in some maximum independent set, and adding an edge to $E(H)$ reduces the
number of maximum independent sets.

Suppose $\left\vert \Omega(G)\right\vert =2^{\alpha\left(  G\right)  }$. Then
$\Omega(G)=\Omega(H)$, since $\Omega(G)\subseteq\Omega(H)$.

First, $V(G)=V(H)$, because, otherwise, if there exists some vertex $x\in
V\left(  H\right)  -V\left(  G\right)  $, then each maximum independent set of
$H$ containing $x$ does not appear in $\Omega(G)$, in contradiction with
$\Omega(G)=\Omega(H)$.

Second, $E(G)=E(H)$, since otherwise, if there is an edge $xy\in E(G)-E(H)$,
then each maximum independent set of $H$ containing $\left\{  x,y\right\}  $
does not appear in $\Omega(G)$, in contradiction with $\Omega(G)=\Omega(H)$.

If $G=\alpha\left(  G\right)  K_{2}$, then clearly, $\left\vert \Omega
(G)\right\vert =2^{\alpha\left(  G\right)  }$.
\end{proof}

\begin{theorem}
\label{th2}For a K\"{o}nig-Egerv\`{a}ry graph $G$, the following assertions
are equivalent:

\emph{(i}) there is a maximum independent set included in $\mathrm{Shed}(G)$;

\emph{(ii) }$\left\vert \Omega(G)\right\vert =2^{\alpha\left(  G\right)  }$;

\emph{(iii) }$\mathrm{Shed}(G)=V\left(  G\right)  $;

\emph{(iv)} every maximum independent set is included in $\mathrm{Shed}(G)$;

\emph{(v) }there exist two disjoint maximum independent sets included in
$\mathrm{Shed}(G)$;

\emph{(vi) }$G=\alpha\left(  G\right)  K_{2}$.
\end{theorem}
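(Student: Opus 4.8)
The plan is to prove the six statements equivalent by closing the single cyclic chain
\[
\text{(i)} \Rightarrow \text{(ii)} \Rightarrow \text{(vi)} \Rightarrow \text{(iii)} \Rightarrow \text{(iv)} \Rightarrow \text{(v)} \Rightarrow \text{(i)}.
\]
Everything rests on two facts already established. Corollary \ref{cor5} converts a maximum independent set of shedding vertices into the lower bound $\left\vert \Omega(G)\right\vert \geq 2^{\alpha(G)}$, and simultaneously hands us a second maximum independent set disjoint from the first. Theorem \ref{lem1} supplies, for every K\"{o}nig-Egerv\`{a}ry graph, the matching upper bound $\left\vert \Omega(G)\right\vert \leq 2^{\alpha(G)}$ together with the rigidity clause that equality forces $G=\alpha(G)K_{2}$.

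The substantive segment is (i) $\Rightarrow$ (ii) $\Rightarrow$ (vi). Assuming (i), I would apply Corollary \ref{cor5} to a maximum independent set $S\subseteq \mathrm{Shed}(G)$ to obtain $\left\vert \Omega(G)\right\vert \geq 2^{\alpha(G)}$; since $G$ is K\"{o}nig-Egerv\`{a}ry, Theorem \ref{lem1} gives the reverse inequality, so $\left\vert \Omega(G)\right\vert = 2^{\alpha(G)}$, which is (ii). The step (ii) $\Rightarrow$ (vi) is then immediate from the equality case of Theorem \ref{lem1}. This squeeze between the shedding lower bound and the K\"{o}nig-Egerv\`{a}ry upper bound is where all the content lives: it collapses $G$ to its most rigid shape, a disjoint union of edges.

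The rest of the chain is structural bookkeeping. For (vi) $\Rightarrow$ (iii), in $G=\alpha(G)K_{2}$ both endpoints of each $K_{2}$ are simplicial, so every vertex is the neighbor of a simplicial vertex and hence lies in $\mathrm{Shed}(G)$ by Proposition \ref{prop1}; thus $\mathrm{Shed}(G)=V(G)$. The implication (iii) $\Rightarrow$ (iv) is trivial, as every maximum independent set is a subset of $V(G)=\mathrm{Shed}(G)$. For (iv) $\Rightarrow$ (v) I would take any $S\in\Omega(G)$; by (iv) we have $S\subseteq \mathrm{Shed}(G)$, so Corollary \ref{cor5} produces some $I\in\Omega(G)$ disjoint from $S$, and applying (iv) again gives $I\subseteq \mathrm{Shed}(G)$, which is exactly the pair required by (v). Finally (v) $\Rightarrow$ (i) is immediate, since (v) already exhibits a maximum independent set contained in $\mathrm{Shed}(G)$.

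The main obstacle is concentrated entirely in the (i) $\Rightarrow$ (ii) $\Rightarrow$ (vi) segment; the only care needed is to confirm that the two bounds on $\left\vert \Omega(G)\right\vert$ are being applied to the same quantity and that the equality clause of Theorem \ref{lem1} is invoked correctly. I would also note the degenerate case $\alpha(G)=0$, where the phrase ``two disjoint maximum independent sets'' trivializes; assuming $G$ has at least one edge removes this anomaly and keeps the cyclic chain clean.
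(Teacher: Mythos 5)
Your proposal is correct and follows essentially the same route as the paper: the content is the squeeze between the lower bound of Corollary \ref{cor5} and the upper bound (with its equality clause) of Theorem \ref{lem1}, and the remaining implications are bookkeeping. The only difference is cosmetic — you thread (vi) into the cycle as (ii) $\Rightarrow$ (vi) $\Rightarrow$ (iii), while the paper proves (ii) $\Rightarrow$ (iii) directly and appends (vi) $\Leftrightarrow$ (ii) separately; your explicit use of (iv) twice in the step (iv) $\Rightarrow$ (v) is in fact slightly more careful than the paper's one-line justification.
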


\begin{proof}
\emph{(i) }$\Rightarrow$\emph{ (ii)} By Corollary \ref{cor5} and Theorem
\ref{lem1}.

\emph{(ii) }$\Rightarrow$\emph{ (iii) }By Theorem \ref{lem1}, every vertex of
$G$ is a leaf. Hence, $\mathrm{Shed}(G)=V\left(  G\right)  $.

\emph{(iii) }$\Rightarrow$\emph{ (iv)} Evident.

\emph{(iv) }$\Rightarrow$\emph{ (v)} By Corollary \ref{cor5}, $G$ has two
disjoint maximum independent sets.

\emph{(v) }$\Rightarrow$\emph{ (i)} Obvious.

\emph{(vi) }$\Leftrightarrow$\emph{ (ii) }By Theorem \ref{lem1}.
\end{proof}

\section{Trees}

Clearly, a leaf is a shedding vertex if and only if its unique neighbor is a
leaf as well. Hence, the only tree $T$ having $\mathrm{Shed}\left(  T\right)
=V\left(  T\right)  $ is $T=K_{2}$. Notice that $\mathrm{Shed}\left(
P_{4}\right)  =\left\{  v:\deg\left(  v\right)  =2\right\}  $, and no maximal
independent set of $P_{4}$ is included in $\mathrm{Shed}\left(  P_{4}\right)
$.

\begin{proposition}
\label{prop2}Let $T$ be a tree, which is not isomorphic to $K_{2}$, and let
$S$ be a maximal independent set. Then $S\subseteq\mathrm{Shed}\left(
T\right)  $ if and only if $S=\mathrm{Shed}\left(  T\right)  $.
\end{proposition}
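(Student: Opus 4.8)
The plan is to prove both implications of the biconditional, with the reverse direction being trivial. If $S=\mathrm{Shed}(T)$, then of course $S\subseteq\mathrm{Shed}(T)$, so all the work lies in the forward direction: assuming $S$ is a maximal independent set with $S\subseteq\mathrm{Shed}(T)$, I must show that $S$ captures \emph{every} shedding vertex, i.e.\ $\mathrm{Shed}(T)\subseteq S$. Since $S$ is independent, no shedding vertex can lie outside $S$ in a way that contradicts independence once I pin down what shedding vertices of a tree look like.

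First I would characterize shedding vertices in a tree using the results already available. By Lemma~\ref{lem11}, in a bipartite graph (hence in any tree) every shedding vertex is codominated, and if $x$ is codominated by $y$ then $y$ is a leaf. So every $x\in\mathrm{Shed}(T)$ has a neighbor $y$ that is a leaf with $N[y]\subseteq N[x]$; since $N[y]=\{x,y\}$, this just says $x$ has a leaf neighbor. Conversely, by Lemma~\ref{lem11} a codominated vertex is shedding, so $\mathrm{Shed}(T)$ is precisely the set of \emph{support vertices} (vertices adjacent to at least one leaf), because $T\neq K_{2}$ guarantees no degenerate situation where a leaf itself becomes shedding. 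This reduction is the conceptual heart of the argument and the step I expect to require the most care, since I must confirm that in a tree the leaves themselves are never shedding (a leaf is shedding iff its neighbor is a leaf, forcing $T=K_{2}$, which is excluded).

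With $\mathrm{Shed}(T)$ identified as the support vertices, I would argue that a maximal independent set $S$ contained in $\mathrm{Shed}(T)$ must in fact equal it. Take any support vertex $x$ with leaf neighbor $y$. Because $S$ is maximal and $y$ is a leaf, maximality forces either $y\in S$ or $x\in S$; but $y\notin\mathrm{Shed}(T)$ (a leaf is not shedding) while $S\subseteq\mathrm{Shed}(T)$, so $y\notin S$, and hence $x\in S$. Thus every support vertex lies in $S$, giving $\mathrm{Shed}(T)\subseteq S$. Combined with the hypothesis $S\subseteq\mathrm{Shed}(T)$ this yields $S=\mathrm{Shed}(T)$, completing the forward direction.

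The main obstacle, as noted, is the clean identification of $\mathrm{Shed}(T)$ with the support vertices; once that is in hand the maximality-of-$S$ argument is short. I would double-check the boundary behaviour forced by excluding $K_{2}$ and by the initial remark that a leaf is shedding precisely when its neighbor is also a leaf, so that no leaf of $T$ can belong to $\mathrm{Shed}(T)$. An alternative framing would dispense with Lemma~\ref{lem11} and argue directly from the definition of shedding vertex, but invoking the stated lemma keeps the proof brief and self-contained within the results already established in the excerpt.
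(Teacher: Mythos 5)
Your proposal is correct and takes essentially the same route as the paper: both identify $\mathrm{Shed}\left(  T\right)  $ with the set of support vertices (the paper via Theorem \ref{th4}, you via Lemma \ref{lem11}), note that no leaf of $T\neq K_{2}$ is shedding and hence no leaf lies in $S$, and then apply maximality of $S$ at a leaf to force its support neighbor into $S$. The only difference is presentational -- the paper argues by contradiction, adding the leaf to $S$, while you argue directly that the leaf must be dominated by $S$ -- which is the same use of maximality.
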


\begin{proof}
The assertion is clearly true for $T=K_{1}$, as $\mathrm{Shed}\left(
K_{1}\right)  =\emptyset$. Assume that $T\neq K_{1}$.

By Theorem \ref{th4}, it follows that the shedding vertices of a tree are
exactly the neighbors of its leaves.

Let $S\subseteq\mathrm{Shed}\left(  T\right)  $ be a maximal independent set
in $T$. Thus each vertex in $V\left(  T\right)  -S$ has a neighbor in $S$.

Assume, to the contrary, that there is some $v\in\mathrm{Shed}\left(
T\right)  -S$. Hence, there must exist $vx,vu\in E\left(  T\right)  $ such
that $x$ is a leaf and $u\in S$. Consequently, we infer that $x\notin%
\mathrm{Shed}\left(  T\right)  $, and consequently, $S\cup\left\{  x\right\}
$ is an independent set larger than $S$, in contradiction with the maximality
of $S$. In conclusion, $S=\mathrm{Shed}\left(  T\right)  $.

The converse is evident.
\end{proof}

\begin{corollary}
\label{cor6}A tree $T$ has a maximum independent set consisting of only
shedding vertices if and only if $T=K_{2}$.
\end{corollary}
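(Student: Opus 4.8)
The plan is to prove both implications, treating the forward direction as the substantive one. First I would observe that a tree $T$ is connected and bipartite, hence a K\"{o}nig-Egerv\'{a}ry graph, so the full strength of Theorem \ref{th2} is available. This is the key reduction: ``$T$ has a maximum independent set consisting of only shedding vertices'' is \emph{verbatim} condition \emph{(i)} of Theorem \ref{th2}.

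For the forward direction, suppose $T$ has a maximum independent set $S\subseteq\mathrm{Shed}(T)$. By the equivalence \emph{(i)} $\Leftrightarrow$ \emph{(vi)} of Theorem \ref{th2}, we get $T=\alpha(T)K_{2}$. Since $T$ is connected while $\alpha(T)K_{2}$ is a disjoint union of $\alpha(T)$ edges, connectivity forces $\alpha(T)=1$, and therefore $T=K_{2}$. The converse is immediate: in $K_{2}$ both vertices are leaves whose unique neighbor is a leaf, so $\mathrm{Shed}(K_{2})=V(K_{2})$, and any single vertex is a maximum independent set contained in $\mathrm{Shed}(K_{2})$.

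The only place demanding care is the passage from $T=\alpha(T)K_{2}$ to $T=K_{2}$, i.e.\ ruling out $\alpha(T)\geq2$; this is exactly where connectivity of a tree enters, and it is the one hypothesis of the statement not already encoded in being K\"{o}nig-Egerv\'{a}ry. Everything else is supplied by Theorem \ref{th2}.

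As a more self-contained alternative I would instead invoke Proposition \ref{prop2} directly. A maximum independent set is maximal, so if $S\subseteq\mathrm{Shed}(T)$ with $T\neq K_{2}$, then $S=\mathrm{Shed}(T)$, whence $\mathrm{Shed}(T)$ is independent and equals the set of support vertices (neighbors of leaves). One then argues that each support vertex has exactly one leaf neighbor -- otherwise swapping a support vertex for its $\geq 2$ leaves would enlarge $S$ -- so the leaves $L$ are in bijection with $S$; combining $|L|=|S|=\alpha(T)$ with $\mu(T)\leq\alpha(T)$ for bipartite graphs and $\alpha(T)+\mu(T)=n(T)$ forces $V(T)=S\cup L$. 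Each support vertex would then have degree $1$, contradicting that support vertices are internal when $T\neq K_{2}$. The main obstacle in this elementary version is precisely this final counting step collapsing the support vertices into leaves; the Theorem \ref{th2} route sidesteps it entirely, which is why I would present that one first.
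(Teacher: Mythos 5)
Your proposal is correct, and your primary argument takes a genuinely different route from the paper's. The paper proves the forward direction entirely within the combinatorics of trees: it invokes Proposition \ref{prop2} to conclude $S=\mathrm{Shed}(T)$, uses Theorem \ref{th4} to see that every vertex of $S$ has a leaf neighbour, deduces that $\mathrm{Leaf}(T)$ is itself a maximum independent set, and then finds an internal vertex $v\notin S$ of $T-\mathrm{Leaf}(T)$ with no leaf neighbour, so that $\mathrm{Leaf}(T)\cup\{v\}$ contradicts maximality of $\mathrm{Leaf}(T)$. You instead observe that a tree is bipartite, hence K\"{o}nig-Egerv\`{a}ry, so the hypothesis is verbatim condition \emph{(i)} of Theorem \ref{th2}; the equivalence with \emph{(vi)} gives $T=\alpha(T)K_{2}$, and connectivity forces $\alpha(T)=1$, i.e.\ $T=K_{2}$. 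This is shorter and exposes the corollary as a special case of the K\"{o}nig-Egerv\`{a}ry theory (there is no circularity, since Theorem \ref{th2} rests only on Corollary \ref{cor5} and Theorem \ref{lem1}); the cost is that it imports the full $2^{\alpha(G)}$-counting machinery behind Theorem \ref{lem1}, whereas the paper's argument is elementary and reuses the just-proved Proposition \ref{prop2}. Your self-contained alternative is closer in spirit to the paper's proof (both start from $S=\mathrm{Shed}(T)$ and a leaf count), but you finish differently: you pin down a bijection between support vertices and leaves, use $\mu(T)\leq\alpha(T)\leq\mu(T)$ to force $V(T)=S\cup\mathrm{Leaf}(T)$, and derive a degree contradiction, rather than exhibiting an explicit vertex enlarging $\mathrm{Leaf}(T)$. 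All three arguments are sound.
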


\begin{proof}
Clearly, $T\neq K_{1}$, since $\mathrm{Shed}\left(  K_{1}\right)  =\emptyset$.

Assume, on the contrary, that $T\neq K_{2}$ and let $S$ be a maximum
independent set such that $S\subseteq\mathrm{Shed}\left(  T\right)  $. Since
$\mathrm{Shed}\left(  K_{1}\right)  =\emptyset$, we infer that $V\left(
T\right)  \geq3$. By Proposition \ref{prop2}, we know that $S=\mathrm{Shed}%
\left(  T\right)  $.

Clearly, $\mathrm{Leaf}\left(  T\right)  $ is independent and $\left\vert
\mathrm{Leaf}\left(  T\right)  \right\vert \geq2$. By Theorem \ref{th4}, every
vertex of $S$ has a leaf as a neighbor. Consequently, $\mathrm{Leaf}\left(
T\right)  $ is a maximum independent set, because $\left\vert \mathrm{Leaf}%
\left(  T\right)  \right\vert \geq\left\vert S\right\vert =\alpha\left(
T\right)  $. Since all the vertices of $\mathrm{Leaf}\left(  T\right)  $ are
leaves of $T$, the subgraph $T-\mathrm{Leaf}\left(  T\right)  $ is a tree
containing all the vertices of $S$. Hence, there exists some $v\in V\left(
T-\mathrm{Leaf}\left(  T\right)  \right)  -S$, because $S$ is independent in
the tree $T-\mathrm{Leaf}\left(  T\right)  $ and $\left\vert S\right\vert
\geq2$. Hence, no neighbor of $v$ in $T$ is a leaf. Therefore, $\mathrm{Leaf}%
\left(  T\right)  \cup\left\{  v\right\}  $ is an independent set larger than
$\mathrm{Leaf}\left(  T\right)  $, in contradiction with $\mathrm{Leaf}\left(
T\right)  \in\Omega\left(  T\right)  $. In conclusion, $T=K_{2}$.

The converse is obvious.
\end{proof}

Notice that $\left\vert \mathrm{Shed}\left(  K_{1}\right)  \right\vert
=0=\alpha\left(  K_{1}\right)  -1$, while $\left\vert \mathrm{Shed}\left(
K_{2}\right)  \right\vert =2=\alpha\left(  K_{2}\right)  +1$.

\begin{proposition}
For a tree $T\neq K_{2}$ the following are true:

\emph{(i)} $\left\vert \mathrm{Shed}\left(  T\right)  \right\vert \leq
\alpha\left(  T\right)  $;

\emph{(ii)} if $\mathrm{Shed}\left(  T\right)  $ is independent, then
$\left\vert \mathrm{Shed}\left(  T\right)  \right\vert \leq\alpha\left(
T\right)  -1$.

Moreover, both inequalities are tight.
\end{proposition}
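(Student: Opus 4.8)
The plan is to reduce both inequalities to the structure of shedding vertices in a tree. As shown inside the proof of Proposition~\ref{prop2} (combine Theorem~\ref{th4}, which forces a shedding vertex of a $5$-cycle-free graph to be codominated, with Lemma~\ref{lem11}, which identifies the codominating vertex as a leaf), $\mathrm{Shed}(T)$ is exactly the set of \emph{support vertices} of $T$, that is, the neighbors of leaves. I would first record two facts. Assigning to each $v\in\mathrm{Shed}(T)$ one of its leaf-neighbors is injective, since a leaf has a single neighbor; hence $\left\vert\mathrm{Shed}(T)\right\vert\leq\left\vert\mathrm{Leaf}(T)\right\vert$. Moreover, for a connected tree $T\neq K_{2}$ no two leaves are adjacent, so $\mathrm{Leaf}(T)$ is independent and therefore $\alpha(T)\geq\left\vert\mathrm{Leaf}(T)\right\vert$. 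Part \emph{(i)} is then immediate: $\left\vert\mathrm{Shed}(T)\right\vert\leq\left\vert\mathrm{Leaf}(T)\right\vert\leq\alpha(T)$.

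For part \emph{(ii)} I would assume $\mathrm{Shed}(T)$ independent and split into two cases; the statement is trivial for $K_{1}$, so take $T\neq K_{1}$. If $\left\vert\mathrm{Leaf}(T)\right\vert>\left\vert\mathrm{Shed}(T)\right\vert$, then $\alpha(T)\geq\left\vert\mathrm{Leaf}(T)\right\vert\geq\left\vert\mathrm{Shed}(T)\right\vert+1$ and we are done. The remaining, and decisive, case is $\left\vert\mathrm{Leaf}(T)\right\vert=\left\vert\mathrm{Shed}(T)\right\vert$; here the goal is to exhibit a vertex $v$ that is neither a leaf nor a support vertex. Such a $v$ has no leaf-neighbor, so $\mathrm{Leaf}(T)\cup\{v\}$ is independent of size $\left\vert\mathrm{Shed}(T)\right\vert+1$, giving $\alpha(T)\geq\left\vert\mathrm{Shed}(T)\right\vert+1$, as required.

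To produce this $v$, I would argue by contradiction: suppose every vertex is a leaf or a support vertex, so $V(T)=\mathrm{Leaf}(T)\cup\mathrm{Shed}(T)$, a disjoint union because $T\neq K_{2}$. Since both $\mathrm{Leaf}(T)$ and $\mathrm{Shed}(T)$ are independent, every edge of $T$ joins a leaf to a support vertex, and as each leaf is incident to exactly one edge this yields that $T$ has exactly $\left\vert\mathrm{Leaf}(T)\right\vert$ edges. Comparing with the tree identity (number of edges equals $\left\vert\mathrm{Leaf}(T)\right\vert+\left\vert\mathrm{Shed}(T)\right\vert-1$) and using $\left\vert\mathrm{Leaf}(T)\right\vert=\left\vert\mathrm{Shed}(T)\right\vert$ forces $\left\vert\mathrm{Shed}(T)\right\vert=1$, hence $T=K_{2}$, a contradiction. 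This balanced case $\left\vert\mathrm{Leaf}(T)\right\vert=\left\vert\mathrm{Shed}(T)\right\vert$ is the only real obstacle, and the edge-count is what rules out the degenerate configuration.

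Finally, for tightness I would use paths. For \emph{(i)} take $T=P_{4}$, where $\mathrm{Shed}(P_{4})=\{v:\deg(v)=2\}$ has size $2=\alpha(P_{4})$; note that here $\mathrm{Shed}(P_{4})$ is not independent, consistent with \emph{(ii)}. For \emph{(ii)} take $T=P_{5}$, whose two support vertices form $\mathrm{Shed}(P_{5})$, an independent set of size $2=\alpha(P_{5})-1$.
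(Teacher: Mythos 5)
Your proof is correct, and part \emph{(i)} follows the paper's own route exactly: $\mathrm{Shed}(T)$ consists precisely of the neighbors of leaves, the leaf-assignment is injective, and $\mathrm{Leaf}(T)$ is independent, giving $\left\vert \mathrm{Shed}(T)\right\vert \leq\left\vert \mathrm{Leaf}(T)\right\vert \leq\alpha(T)$. For part \emph{(ii)} the paper simply invokes Corollary \ref{cor6} (a maximum independent set inside $\mathrm{Shed}(T)$ forces $T=K_{2}$): if $\mathrm{Shed}(T)$ were independent of size $\alpha(T)$ it would be such a set. You instead give a self-contained argument whose skeleton parallels the paper's proof of Corollary \ref{cor6} --- exhibit a vertex that is neither a leaf nor a support vertex and adjoin it to $\mathrm{Leaf}(T)$ --- but you produce that vertex differently: the paper observes that an independent set of size at least $2$ cannot exhaust the connected tree $T-\mathrm{Leaf}(T)$, whereas you rule out $V(T)=\mathrm{Leaf}(T)\cup\mathrm{Shed}(T)$ by counting edges ($m=\left\vert \mathrm{Leaf}(T)\right\vert$ versus $m=n-1$), which in the balanced case $\left\vert \mathrm{Leaf}(T)\right\vert =\left\vert \mathrm{Shed}(T)\right\vert$ collapses $T$ to $K_{2}$. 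Both arguments are sound; yours avoids any appeal to the earlier corollary at the cost of an extra case split, while the paper's is shorter given the machinery already established. Your tightness witnesses ($P_{4}$ for \emph{(i)}, $P_{5}$ for \emph{(ii)}) are valid and simpler than the paper's ($P_{n}\circ K_{1}$ and the subdivided star, which show tightness for arbitrarily large $\alpha$).
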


\begin{proof}
\emph{(i) }According to\emph{ }Theorem \ref{th4}, $\left\vert \mathrm{Shed}%
\left(  T\right)  \right\vert \leq\left\vert \mathrm{Leaf}\left(  T\right)
\right\vert $. In addition, $\left\vert \mathrm{Leaf}\left(  T\right)
\right\vert \leq\alpha\left(  T\right)  $, since $\mathrm{Leaf}\left(
T\right)  $ is independent. The series of graphs $P_{n}\circ K_{1},n\geq2$
shows that $\alpha\left(  T\right)  $ is the tight upper bound for $\left\vert
\mathrm{Shed}\left(  T\right)  \right\vert $.

\emph{(ii) }The inequality $\left\vert \mathrm{Shed}\left(  T\right)
\right\vert \leq\alpha\left(  T\right)  -1$ directly follows from Corollary
\ref{cor6}.

Let $p\geq2$, and $T$ be the tree obtained from $K_{1,p}$ by adding $p$
vertices, each one joined to a leaf of $K_{1,p}$. The set $\mathrm{Shed}%
\left(  T\right)  $ consists of all the leaves of $K_{1,p}$, while
$\alpha\left(  T\right)  =p+1$. Hence, $\left\vert \mathrm{Shed}\left(
T\right)  \right\vert =p=\alpha\left(  T\right)  -1$. Thus, in this case,
$\alpha\left(  T\right)  -1$ is the tight upper bound for $\left\vert
\mathrm{Shed}\left(  T\right)  \right\vert $.
\end{proof}

\section{Unicyclic graphs}

A graph $G$ is \textit{unicyclic} if it is connected and has a unique cycle,
which we denote by $C=\left(  V(C),E\left(  C\right)  \right)  $. Let
$N_{1}(C)=\{v:v\in V\left(  G\right)  -V(C),N(v)\cap V(C)\neq\emptyset\}$, and
$T_{x}=(V_{x},E_{x})$ be the maximum subtree of $G-xy$ containing $x$, where
$x\in N_{1}(C),y\in V(C)$.

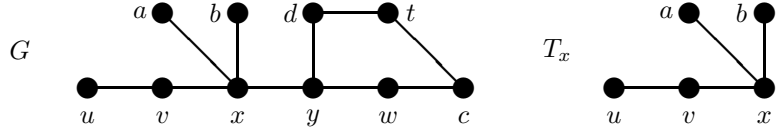
\begin{figure}[h]
\setlength{\unitlength}{1cm}\begin{picture}(5,1.8)\thicklines
\multiput(2,0.5)(1,0){6}{\circle*{0.29}}
\multiput(3,1.5)(1,0){4}{\circle*{0.29}}
\put(2,0.5){\line(1,0){5}}
\put(3,1.5){\line(1,-1){1}}
\put(4,0.5){\line(0,1){1}}
\put(5,1.5){\line(1,0){1}}
\put(6,1.5){\line(1,-1){1}}
\put(5,0.5){\line(0,1){1}}
\put(2,0.1){\makebox(0,0){$u$}}
\put(3,0.1){\makebox(0,0){$v$}}
\put(4,0.1){\makebox(0,0){$x$}}
\put(5,0.1){\makebox(0,0){$y$}}
\put(6,0.1){\makebox(0,0){$w$}}
\put(7,0.1){\makebox(0,0){$c$}}
\put(2.7,1.5){\makebox(0,0){$a$}}
\put(3.7,1.5){\makebox(0,0){$b$}}
\put(4.7,1.5){\makebox(0,0){$d$}}
\put(6.3,1.5){\makebox(0,0){$t$}}
\put(1.1,1){\makebox(0,0){$G$}}
\multiput(9,0.5)(1,0){3}{\circle*{0.29}}
\multiput(10,1.5)(1,0){2}{\circle*{0.29}}
\put(9,0.5){\line(1,0){2}}
\put(10,1.5){\line(1,-1){1}}
\put(11,0.5){\line(0,1){1}}
\put(9,0.1){\makebox(0,0){$u$}}
\put(10,0.1){\makebox(0,0){$v$}}
\put(11,0.1){\makebox(0,0){$x$}}
\put(9.7,1.5){\makebox(0,0){$a$}}
\put(10.7,1.5){\makebox(0,0){$b$}}
\put(8.25,1){\makebox(0,0){$T_{x}$}}
\end{picture}\caption{$G$ is a unicyclic non-K\"{o}nig-Egerv\'{a}ry graph with
$V(C)=\{y,d,t,c,w\}$.}%
\end{figure}

\begin{lemma}
\cite{LevMan2012} If $G$ is a unicyclic graph, then $n\left(  G\right)
-1\leq\alpha(G)+\mu(G)\leq n\left(  G\right)  $.
\end{lemma}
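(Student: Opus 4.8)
The plan is to prove the two inequalities by entirely separate means: the upper bound $\alpha(G)+\mu(G)\leq n(G)$ holds for \emph{every} graph and uses nothing about the unicyclic structure, while the lower bound $\alpha(G)+\mu(G)\geq n(G)-1$ is where I would exploit that $G$ has a single cycle.

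For the upper bound I would fix a maximum independent set $S$, so that $\left\vert S\right\vert =\alpha(G)$, and note that $V(G)-S$ is a vertex cover of size $n(G)-\alpha(G)$. Let $M$ be a maximum matching. Since $S$ is independent, no edge of $M$ has both endpoints in $S$, so each edge of $M$ contributes at least one endpoint lying in $V(G)-S$; as the edges of $M$ are pairwise disjoint, this assigns distinct vertices of $V(G)-S$ to the edges of $M$. Hence $\mu(G)=\left\vert M\right\vert \leq\left\vert V(G)-S\right\vert =n(G)-\alpha(G)$, which is exactly $\alpha(G)+\mu(G)\leq n(G)$. (Equivalently, this is the Gallai identity combined with the fact that a matching cannot exceed the vertex-cover number.)

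For the lower bound I would reduce to a tree. Let $e$ be an edge of the unique cycle $C$ and put $T=G-e$. Deleting an edge of the only cycle keeps $G$ connected and destroys its sole cycle, so $T$ is a spanning tree of $G$ on $n(G)$ vertices. Being a tree, $T$ is bipartite, hence a K\"{o}nig-Egerv\'{a}ry graph, so $\alpha(T)+\mu(T)=n(T)=n(G)$. It then remains to compare the two parameters across the single edge. First, $E(T)\subseteq E(G)$ gives $\mu(G)\geq\mu(T)$, since any matching of $T$ is a matching of $G$. Second, writing $e=xy$, a maximum independent set $I$ of $T$ is already independent in $G$ unless $\{x,y\}\subseteq I$, in which case $I-\{x\}$ is independent in $G$; either way $\alpha(G)\geq\alpha(T)-1$. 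Adding these two estimates yields $\alpha(G)+\mu(G)\geq(\alpha(T)-1)+\mu(T)=n(G)-1$.

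I do not expect a serious obstacle: the real content is recognizing that the two bounds call for different tools, and that the lower bound is cleanest when obtained \emph{uniformly} (independently of the parity of $C$) by deleting one cycle edge and invoking the K\"{o}nig-Egerv\'{a}ry equality for trees, rather than splitting into even- and odd-cycle cases. The one point to handle carefully is the estimate $\alpha(G)\geq\alpha(T)-1$: one must observe that removing a single edge raises the independence number by at most one, so passing back from $T$ to $G$ costs at most one, and that a single deletion already breaks the cycle, so no further edge removal is ever needed.
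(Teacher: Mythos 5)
Your proof is correct. The paper itself gives no argument for this lemma --- it is quoted from \cite{LevMan2012} without proof --- so there is nothing to compare against, but your two-part argument is sound and self-contained: the upper bound is the general fact $\mu(G)\leq n(G)-\alpha(G)$ (a matching injects into any vertex cover, in particular into the complement of a maximum independent set), and the lower bound correctly exploits that deleting one edge of the unique cycle yields a spanning tree $T$ with $\alpha(T)+\mu(T)=n(G)$, after which $\mu(G)\geq\mu(T)$ and $\alpha(G)\geq\alpha(T)-1$ give $\alpha(G)+\mu(G)\geq n(G)-1$. The one delicate step, that reinserting a single edge lowers $\alpha$ by at most one, is handled explicitly and correctly.
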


For instance, every $C_{2k}$ is a unicyclic K\"{o}nig-Egerv\'{a}ry graph,
while every $C_{2k+1}$ is a unicyclic non-K\"{o}nig-Egerv\'{a}ry graph.

\begin{theorem}
\cite{LevMan2012}\label{th13} Let $G$ be a unicyclic
non-K\"{o}nig-Egerv\'{a}ry graph. Then the following assertions are true:

\emph{(i) }$W\in\Omega\left(  T_{x}\right)  $ if and only if $W=S\cap V\left(
T_{x}\right)  $\ \ for some $S\in\Omega\left(  G\right)  $;

\emph{(ii)} $\mathrm{core}\left(  G\right)  =%
%TCIMACRO{\dbigcup }%
%BeginExpansion
{\displaystyle\bigcup}
%EndExpansion
\left\{  \mathrm{core}\left(  T_{x}\right)  :x\in N_{1}\left(  C\right)
\right\}  $.
\end{theorem}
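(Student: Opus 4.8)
The plan is to reduce both parts to two structural facts about how a maximum independent set of $G$ splits across the cycle $C$ and the branches $T_x$, and to treat the interaction along the attaching bridges $xy$ as the technical core. First I would record the elementary decomposition of the vertex set. Since $G$ is unicyclic, every edge $xy$ with $x\in N_1(C)$ and $y\in V(C)$ is a bridge, so $T_x$ is exactly the component of $G-xy$ containing $x$; distinct roots $x$ yield vertex-disjoint trees, and $V(G)=V(C)\cup\bigcup_{x\in N_1(C)}V(T_x)$ is a partition with $G-V(C)=\bigcup_x T_x$. I would also note at the outset that $C$ must be odd: a unicyclic graph with an even cycle is bipartite, hence König-Egerv\`ary, contradicting the hypothesis.

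The main step is the additivity identity $\alpha(G)=\alpha(C)+\sum_{x\in N_1(C)}\alpha(T_x)=\alpha(C)+\alpha(G-V(C))$. Granting it, part (i) in the forward direction is immediate: for $S\in\Omega(G)$, writing $\left\vert S\right\vert=\left\vert S\cap V(C)\right\vert+\sum_x\left\vert S\cap V(T_x)\right\vert$ and bounding each summand by $\alpha(C)$, respectively $\alpha(T_x)$, the identity forces equality in every summand, so $S\cap V(T_x)\in\Omega(T_x)$ and $S\cap V(C)\in\Omega(C)$. For the converse I would start from an arbitrary $S_0\in\Omega(G)$, whose trace $S_0\cap V(T_x)$ is already maximum in $T_x$, and swap it for the prescribed $W\in\Omega(T_x)$: the set $\left(S_0\setminus V(T_x)\right)\cup W$ is independent unless the bridge $xy$ is violated, i.e. unless simultaneously $x\in W$ and $y\in S_0$. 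In that single bad case I would replace the cycle-part of $S_0$ by a maximum independent set of the odd cycle $C$ that avoids $y$ (odd cycles admit such a rotation), and re-choose the branch maxima at the newly freed attaching vertices so that no bridge is violated, which is possible by the avoidance property discussed below.

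For part (ii) the reduction is clean once (i) is in hand. Using distributivity of intersection over the partition, $\mathrm{core}(G)\cap V(T_x)=\bigcap\{S\cap V(T_x):S\in\Omega(G)\}=\bigcap\Omega(T_x)=\mathrm{core}(T_x)$, where the middle equality is precisely part (i). It then remains to show that no vertex of $C$ lies in $\mathrm{core}(G)$; equivalently, that for each $y\in V(C)$ some $S\in\Omega(G)$ omits $y$. Since $C$ is odd it has a maximum independent set omitting $y$, and the same extension mechanism produces the required $S$; as $\mathrm{core}(C)=\emptyset$, this gives $\mathrm{core}(G)\cap V(C)=\emptyset$. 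Combining with the partition yields $\mathrm{core}(G)=\bigcup_x\mathrm{core}(T_x)$.

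The hard part will be the additivity identity together with the realizability used in the converse of (i) and in (ii): both amount to extending a prescribed maximum independent set of the cycle, or of one branch, compatibly across all attaching bridges. The obstruction is local to the odd cycle: a branch $T_x$ blocks its attaching vertex $y$ only when $x$ lies in every maximum independent set of $T_x$, that is, when $x\in\mathrm{core}(T_x)$, while for every other branch one may freely pick a branch-maximum omitting its root. Thus everything reduces to showing that the set of forced-blocked cycle vertices can be avoided by a maximum independent set of $C_{2r+1}$, and still can after deleting one further prescribed vertex. This is exactly where the hypothesis $\alpha(G)+\mu(G)=n(G)-1$ — equivalently, that the minimum vertex cover exceeds the maximum matching by exactly one — must be invoked, and I expect establishing this avoidance property to be the crux of the argument.
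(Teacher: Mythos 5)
A preliminary remark: Theorem \ref{th13} is imported from \cite{LevMan2012}; the present paper states it without proof, so there is no in-paper argument to compare yours against, and I can only judge your proposal on its own terms.

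Your structural setup is correct (the bridges $xy$ partition $V(G)$ into $V(C)$ and the pairwise disjoint sets $V(T_{x})$, and $C$ must be odd since an even unique cycle would make $G$ bipartite, hence K\"{o}nig-Egerv\'{a}ry), and the architecture --- additivity of $\alpha$ over the partition, an exchange argument across the bridges for the converse of \emph{(i)}, and the reduction of \emph{(ii)} to \emph{(i)} plus $\mathrm{core}(G)\cap V(C)=\emptyset$ --- is a plausible route. But the proposal is not a proof: the two load-bearing claims, namely the identity $\alpha(G)=\alpha(C)+\sum_{x}\alpha(T_{x})$ and the ``avoidance property'' (that a maximum independent set of the odd cycle can dodge every attaching vertex $y$ whose branch forces $x\in\mathrm{core}(T_{x})$, and can still dodge one further prescribed vertex), are both left as announcements. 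You say explicitly that you ``expect establishing this avoidance property to be the crux of the argument,'' and the additivity identity is introduced with ``granting it.'' These are precisely the steps where the non-K\"{o}nig-Egerv\'{a}ry hypothesis must enter, and they cannot be waved through: for $G=C_{3}\circ K_{1}$ (a K\"{o}nig-Egerv\'{a}ry unicyclic graph) one has $\alpha(G)=3$ while $\alpha(C)+\sum_{x}\alpha(T_{x})=4$, so additivity genuinely fails without the hypothesis, and nothing in your sketch shows how $\alpha(G)+\mu(G)=n(G)-1$ is actually used. The same gap propagates into \emph{(ii)}: $\mathrm{core}(C)=\emptyset$ does not by itself give $\mathrm{core}(G)\cap V(C)=\emptyset$; you need enough elements of $\Omega(C)$ to be realized as traces $S\cap V(C)$ with $S\in\Omega(G)$, which is again the unproven extension mechanism. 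In short, you have correctly located the crux of the argument but have not supplied it, so the proposal is a plan rather than a proof.
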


\begin{theorem}
\label{th10}A unicyclic graph $G$ has two disjoint maximum independent sets if
and only if, either $G$ is a bipartite graph with a perfect matching, or there
is a vertex $v$ belonging to its unique cycle, such that $G-v$ has a perfect matching.
\end{theorem}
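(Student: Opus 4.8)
The plan is to split the argument according to whether $G$ is a K\"{o}nig-Egerv\'{a}ry graph. If $G$ is K\"{o}nig-Egerv\'{a}ry, then Theorem \ref{th9} settles the equivalence immediately: such a graph has two disjoint maximum independent sets if and only if it is bipartite with a perfect matching, which is exactly the first alternative. The real work is the case where $G$ is \emph{not} K\"{o}nig-Egerv\'{a}ry. Here the unique cycle $C$ must be odd (an even cycle would make $G$ bipartite, hence K\"{o}nig-Egerv\'{a}ry), and the unicyclic bound $n(G)-1\le\alpha(G)+\mu(G)\le n(G)$ then forces $\alpha(G)+\mu(G)=n(G)-1$. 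In this regime I would prove the equivalence with the second alternative.

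For the forward direction in the non-K\"{o}nig case, let $S_{1},S_{2}$ be two disjoint maximum independent sets. Corollary \ref{cor3} gives $\mu(G)\ge\alpha(G)$, so $\alpha(G)\le (n(G)-1)/2$. Since $G[S_{1}\cup S_{2}]$ is bipartite it cannot contain the odd cycle $C$, so some $v\in V(C)$ lies outside $S_{1}\cup S_{2}$. Deleting $v$ destroys the only cycle, so $G-v$ is a forest, hence a K\"{o}nig-Egerv\'{a}ry graph with $\alpha(G-v)+\mu(G-v)=n(G)-1$. Because $S_{1}\subseteq V(G-v)$ remains independent, $\alpha(G-v)=\alpha(G)$, and therefore $\mu(G-v)=n(G)-1-\alpha(G)=\mu(G)$. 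A matching of a graph on $n(G)-1$ vertices has size at most $(n(G)-1)/2$, so $\mu(G)\le(n(G)-1)/2$, which together with the earlier bound yields $\alpha(G)=\mu(G)=(n(G)-1)/2$. Then $\mu(G-v)=(n(G)-1)/2$ means $G-v$ has a perfect matching, which is the second alternative.

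For the converse the first alternative gives two disjoint maximum independent sets directly via the K\"{o}nig direction of Theorem \ref{th9}. For the second alternative, take the cycle vertex $v$ for which $G-v$ has a perfect matching $M$ of size $(n(G)-1)/2$ (so $n(G)$ is odd). Then $G-v$ is a bipartite forest whose two color classes $A,B$ are disjoint independent sets of $G$ with $|A|=|B|=(n(G)-1)/2$, matched to one another by $M$. It remains to check that $A$ and $B$ are maximum.

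The step I expect to be the main obstacle is precisely this last verification, i.e. showing $\alpha(G)=(n(G)-1)/2$ rather than $(n(G)+1)/2$. From $\mu(G)\ge|M|=(n(G)-1)/2$ and $\mu(G)\le(n(G)-1)/2$ (as $n(G)$ is odd) one gets $\mu(G)=(n(G)-1)/2$; the desired value of $\alpha(G)$ then follows from $\alpha(G)+\mu(G)=n(G)-1$, that is, from $G$ being non-K\"{o}nig-Egerv\'{a}ry. Thus the delicate point is that the bare hypothesis ``$G-v$ has a perfect matching'' must be accompanied by $G$ lying in the non-K\"{o}nig-Egerv\'{a}ry regime: if instead $G$ were a K\"{o}nig-Egerv\'{a}ry graph carrying an odd cycle, then $\alpha(G)=n(G)-\mu(G)=(n(G)+1)/2$ and the classes $A,B$ would fail to be maximum. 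Confirming that one is in the correct regime is where I would concentrate the effort, invoking Theorem \ref{th13} to control the maximum independent sets of the trees hanging off $C$.
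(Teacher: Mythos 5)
Your forward direction is complete and correct, and it takes a genuinely different (and more elementary) route than the paper's: the paper deduces the existence of the uncovered cycle vertex $v$ by applying Theorem \ref{th13} and Theorem \ref{th8} to the subtrees $T_{x}$ and counting $\left\vert S_{i}\cap V(C)\right\vert $, whereas you obtain $v$ at once from the bipartiteness of $G[S_{1}\cup S_{2}]$ versus the oddness of $C$, and then extract the perfect matching of $G-v$ from the K\"{o}nig-Egerv\'{a}ry identity for the forest $G-v$ combined with Corollary \ref{cor3}. Nothing is missing there.

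The converse is where you stop, and the obstacle you single out is real but cannot be removed: the hypothesis that $G-v$ has a perfect matching for some $v\in V(C)$ does \emph{not} force $G$ into the non-K\"{o}nig-Egerv\'{a}ry regime, and Theorem \ref{th13} cannot help you establish that regime, since it already presupposes $G$ is non-K\"{o}nig-Egerv\'{a}ry. Concretely, let $G$ be the triangle $abc$ with a pendant vertex $d$ attached to $a$ and a pendant vertex $e$ attached to $b$. Then $c\in V(C)$ and $G-c$ is the path $d,a,b,e$, which has a perfect matching; nevertheless $\alpha(G)=3$ (witness $\{c,d,e\}$) while $n(G)=5$, so $G$ has no two disjoint maximum independent sets. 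Here $\alpha(G)+\mu(G)=n(G)$, i.e., $G$ is a K\"{o}nig-Egerv\'{a}ry graph with an odd cycle --- exactly the failure mode you predicted. The paper's own proof of the converse silently commits the same error: the equality $n(G)-1-\mu(G)=\alpha(G)$ it invokes is precisely the unjustified non-K\"{o}nig-Egerv\'{a}ry assumption. So your diagnosis pinpoints a genuine flaw in the statement itself; once the second alternative is strengthened (e.g., by adding that $G$ is not a K\"{o}nig-Egerv\'{a}ry graph, equivalently that $\alpha(G)=\left(  n(G)-1\right)  /2$), your sketch of the converse closes without difficulty.
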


\begin{proof}
Let $S_{1},S_{2}\in$ $\Omega\left(  G\right)  $ be such that $S_{1}\cap
S_{2}=\emptyset$. Clearly, $\mathrm{core}\left(  G\right)  =\emptyset$.

\textit{Case 1}. $G$ is a K\"{o}nig-Egerv\'{a}ry graph. Then, by Theorem
\ref{th9}, it follows that $G$ must a bipartite graph with a perfect matching.

\textit{Case 2}. $G$ is not a K\"{o}nig-Egerv\'{a}ry graph. Since
$\mathrm{core}\left(  G\right)  =\emptyset$, Theorem \ref{th13}\emph{(ii)}
implies $\mathrm{core}\left(  T_{x}\right)  =\emptyset$ for every $x\in
N_{1}\left(  C\right)  $. Hence, each $T_{x}$ has a perfect matching, by
Theorem \ref{th8}, and $S_{1}\cap V\left(  T_{x}\right)  ,S_{2}\cap V\left(
T_{x}\right)  \in\Omega\left(  T_{x}\right)  $, according to Theorem
\ref{th13}\emph{(i)}. Therefore, we get
\[
\left\vert S_{1}\cap V\left(  C\right)  \right\vert =\left\vert S_{2}\cap
V\left(  C\right)  \right\vert =\frac{\left\vert V\left(  C\right)
\right\vert -1}{2}.
\]
Thus, there is some $v\in V\left(  C\right)  $, such that $v\notin S_{1}\cup
S_{2}$. Finally, $G-v$ is a forest with a perfect matching.

Conversely, if $G$ is a bipartite graph with a perfect matching, then its
bipartition is comprised of two disjoint maximum independent sets.

Otherwise, there is a vertex $v$ belonging to its unique cycle, such that
$G-v$ is a forest with a perfect matching. If $\left\{  A,B\right\}  $ is a
bipartition of the vertex set of $G-v$, then clearly, $A$ and $B$ are disjoint
independent sets of $G$ of size equal to%
\[
\mu\left(  G\right)  =\mu\left(  G-v\right)  =\frac{n\left(  G\right)  -1}%
{2}.
\]
Moreover, $A,B\in\Omega\left(  G\right)  $, because, otherwise, we have
\[
n\left(  G\right)  -1-\mu\left(  G\right)  =\alpha\left(  G\right)
>\left\vert A\right\vert =\mu\left(  G-v\right)  =\mu\left(  G\right)  ,
\]
which leads to the following contradiction: $n\left(  G\right)  -1>2\mu\left(
G\right)  $.
\end{proof}

It is well-known that the matching number of a bipartite graph $G$ can be
computed in $O\left(  n\left(  G\right)  ^{\frac{5}{2}}\right)  $
\cite{HopfortKarp1973}. Thus Theorem \ref{th10} implies the following.

\begin{corollary}
One can decide in polynomial time whether a unicyclic graph has two disjoint
maximum independent sets.
\end{corollary}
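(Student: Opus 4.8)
The plan is to convert the structural characterization of Theorem \ref{th10} into a decision procedure and then bound its running time. Theorem \ref{th10} asserts that a unicyclic graph $G$ admits two disjoint maximum independent sets precisely when one of two readily testable conditions holds: either $G$ is bipartite and has a perfect matching, or some vertex $v$ on the unique cycle $C$ satisfies that $G-v$ has a perfect matching. Hence the algorithm has only to test these two conditions and answer \emph{yes} exactly when at least one of them is satisfied; by Theorem \ref{th10} this answer is correct.

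First I would locate the unique cycle $C$ of $G$, which takes linear time: a connected graph with $m(G)=n(G)$ has exactly one cycle, obtained by repeatedly peeling off leaves until only $C$ remains. With $C$ in hand, bipartiteness of $G$ is equivalent to $\left\vert V(C)\right\vert$ being even, so it is immediate. If $G$ is bipartite, I would run a bipartite maximum-matching routine---e.g.\ Hopcroft--Karp, which costs $O(n(G)^{5/2})$ according to the remark preceding the corollary---and check whether $\mu(G)=n(G)/2$, that is, whether the matching is perfect.

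For the second condition I would iterate over the vertices $v\in V(C)$. Removing a cycle vertex destroys the only cycle, so each $G-v$ is a forest and therefore bipartite; whether $G-v$ has a perfect matching is thus again a bipartite (indeed forest) matching test, solvable in polynomial time. Since $\left\vert V(C)\right\vert\leq n(G)$, at most $O(n(G))$ such tests are needed, each polynomial, so the total work is polynomial. Combining the two phases, the procedure runs in polynomial time and returns the correct answer.

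The argument is short because Theorem \ref{th10} already supplies the full combinatorial characterization, so there is no genuine obstacle. The only points needing a moment's care are the two observations that make every matching computation fall inside the polynomial-time regime guaranteed for bipartite graphs: that $G$ is bipartite if and only if its cycle is even (so bipartiteness costs nothing beyond finding $C$), and that $G-v$ is a forest whenever $v\in V(C)$ (so each perfect-matching test is a bipartite one).
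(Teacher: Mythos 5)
Your proposal is correct and follows the same route the paper intends: the paper derives the corollary directly from Theorem \ref{th10} together with the remark that bipartite maximum matching is computable in $O\left(n(G)^{5/2}\right)$ time, which is exactly the reduction you spell out. Your additional observations (locating $C$ by leaf-peeling, bipartiteness of $G$ being equivalent to $\left\vert V(C)\right\vert$ even, and $G-v$ being a forest for $v\in V(C)$) merely make explicit the details the paper leaves to the reader.
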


\section{Conclusions}

Theorem \ref{th7} and Corollary \ref{cor1} provide us with a complete
description of very well-covered graphs of girth $\geq5$ containing a pair of
disjoint maximum independent sets. Corollary \ref{cor4} tells us that the only
girth under consideration left is four.

\begin{problem}
Find a constructive characterization of very well-covered graphs (bipartite
well-covered) of girth equal to $4$, that have two disjoint maximum
independent sets at least.
\end{problem}

The same question may be asked about other classes of graphs. Recall that $G$
is an \textit{edge }$\alpha$\textit{-critical graph} if $\alpha\left(
G-e\right)  >\alpha\left(  G\right)  $, for every $e\in E\left(  G\right)  $.
For instance, every odd cycle $C_{2k+1}$ and its complement are edge $\alpha
$-critical graphs. Moreover, both $C_{2k+1}$ and $\overline{C_{2k+1}}$\ have
two disjoint maximum independent sets.

\begin{conjecture}
\cite{JansenKruger2015} If $G$ is an edge $\alpha$-critical graph without
isolated vertices, then it has two disjoint maximum independent sets.
\end{conjecture}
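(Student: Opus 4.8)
The plan is to verify one of the equivalent conditions of Theorem \ref{prop:two disjoint-matching}, most conveniently condition \emph{(iv)} (an induced bipartite subgraph of order $2\alpha(G)$) or the essentially equivalent condition \emph{(v)} (a set $A\subset V(G)$ with $G-A$ bipartite and carrying a perfect matching of size $\alpha(G)$). First I would reduce to the connected case: a graph is edge $\alpha$-critical precisely when each of its components is, and ``without isolated vertices'' is a per-component condition, so unioning one disjoint pair per component yields a disjoint pair for $G$; thus assume $G$ connected with $n(G)\geq 2$.

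The next step is to extract the local structure forced by criticality. For every edge $uv\in E(G)$ we have $\alpha(G-uv)>\alpha(G)$, so there is an independent set $I_{uv}$ of $G-uv$ of size $\alpha(G)+1$; since deleting $uv$ is the only change, $I_{uv}$ is independent in $G$ apart from the edge $uv$, whence $u,v\in I_{uv}$ and both $I_{uv}\setminus\{u\}$ and $I_{uv}\setminus\{v\}$ lie in $\Omega(G)$. So each edge produces two maximum independent sets differing exactly by a swap of its endpoints. A first corollary I would record is $\mathrm{core}(G)=\emptyset$: any vertex $v$ has a neighbour $u$ (no isolated vertices), and the swap set containing $u$ omits $v$. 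This confirms the obvious necessary condition, and in combination with Corollary \ref{cor3} it suggests isolating the weaker statement $\mu(G)\geq\alpha(G)$ for edge $\alpha$-critical graphs as a warm-up lemma before attacking disjointness.

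For the global argument I would invoke the structure theory of $\alpha$-critical graphs, along two complementary routes. Route (b), which I regard as the more principled, uses Lovász's description: every $\alpha$-critical graph arises from one of finitely many ``basic'' $\alpha$-critical graphs (no vertex of degree two) of the same defect $n(G)-2\alpha(G)$ by repeatedly replacing an edge with an odd path (even subdivision), the odd cycles being exactly the subdivisions of $K_3$. Here one proves the conjecture for the basic graphs and then shows that an odd-path subdivision preserves the property, routing the two disjoint sets through each subdivided path by means of the endpoint-swap sets $I_{uv}$ above, with a short case analysis on which of $u,v$ lies in which set. Route (a) attempts the same via the Gallai--Edmonds decomposition and the factor-critical pieces; for blocks that are odd cycles this is clean (delete one vertex to obtain an even path, bipartite with a perfect matching), and it is what realises condition \emph{(v)} directly.

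The main obstacle I anticipate is precisely the global assembly, compounded by the behaviour of the defect. Route (a) cannot in general delete a single vertex per block: a clique block such as $K_n$ (itself edge $\alpha$-critical) is factor-critical only when $n$ is odd, and even then $K_n-v$ is not bipartite, so realising condition \emph{(v)} forces deleting $A$ down to a subgraph of order $2\alpha(G)$, which for $K_n$ or $\overline{C_{2k+1}}$ is most of the graph. In these examples $\mu(G)$ greatly exceeds $\alpha(G)$, so condition \emph{(iii)} demands a matching of size \emph{exactly} $\alpha(G)$ whose covered vertices induce a bipartite subgraph, not merely a large matching; controlling this, making the per-edge swaps and per-block bipartitions agree at the attachment vertices of $A(G)$ without parity conflicts, and disposing uniformly of the basic graphs across all defects, is where I expect the genuine difficulty to concentrate.
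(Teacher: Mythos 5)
This statement is presented in the paper as a \emph{conjecture} (attributed to a question of Jansen and Kr\"{u}ger); the paper contains no proof of it, so there is no argument of the authors to compare yours against. Your text, read on its own terms, is also not a proof: it is a research programme whose central step you explicitly leave open. The local facts you establish are correct --- for each edge $uv$ one has $\alpha(G-uv)=\alpha(G)+1$, the witnessing set $I_{uv}$ contains both $u$ and $v$, and $I_{uv}\setminus\{u\}$, $I_{uv}\setminus\{v\}$ are maximum independent sets differing by an endpoint swap, whence $\mathrm{core}(G)=\emptyset$. But this is far weaker than the conclusion: $\mathrm{core}(G)=\emptyset$ does not imply the existence of two disjoint maximum independent sets (e.g.\ $C_{5}\circ K_{1}$, or the graph $G_{1}$ of Figure \ref{fig4444}, has empty core yet no such pair), so everything rests on the ``global assembly,'' which you name as the genuine difficulty and do not carry out.

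Two further points about the proposed global routes. Lov\'{a}sz's finiteness theorem for $\alpha$-critical graphs of minimum degree at least $3$ holds \emph{per fixed defect} $\delta=n(G)-2\alpha(G)$; there are infinitely many basic graphs as $\delta$ ranges over all values, so ``disposing uniformly of the basic graphs across all defects'' is not a finite verification but is itself the open problem in disguise. And, as you correctly observe, condition \emph{(iii)} of Theorem \ref{prop:two disjoint-matching} requires a matching of size exactly $\alpha(G)$ inducing a bipartite subgraph, which for dense critical graphs such as $K_{n}$ or $\overline{C_{2k+1}}$ is a much more delicate object than a large matching; no mechanism in the proposal produces it in general. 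In short: the approach is sensible as a plan of attack, but the statement remains unproved both in the paper and in your attempt.
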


It is known that the decision problem whether there are two disjoint maximal
independent sets in a graph is \textbf{NP}-complete \cite{HLR2009}.

\begin{conjecture}
It is \textbf{NP}-complete to recognize (well-covered) graphs with two
disjoint maximum independent sets.
\end{conjecture}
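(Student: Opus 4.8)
The plan is to separate the two obligations hidden in the statement, namely \emph{membership} in \textbf{NP} and \emph{hardness}. For membership I would treat the well-covered case as a promise problem: given that the input $G$ is well-covered, every maximal independent set is already maximum, so a pair of disjoint \emph{maximal} independent sets is a polynomially verifiable certificate for a pair of disjoint \emph{maximum} independent sets. For arbitrary $G$ the situation is subtler, because certifying that a guessed independent set is maximum requires the value $\alpha(G)$, and ``$\alpha(G)\le t$'' is a co-\textbf{NP} condition; by Theorem \ref{prop:two disjoint-matching}\emph{(iv)} the question is equivalent to asking whether $G$ contains an induced bipartite subgraph of order $2\alpha(G)$, a target that itself depends on the hard parameter $\alpha(G)$. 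Without the well-covered promise the problem may therefore sit higher in the polynomial hierarchy, and I would flag this explicitly, since it is plausibly the reason the statement is only conjectured in the general form.

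For hardness I would reduce from the problem of deciding whether a graph has two disjoint \emph{maximal} independent sets, shown \textbf{NP}-complete in \cite{HLR2009}. Given an instance $G$ of that problem, the goal is to build a \emph{well-covered} graph $G'$ for which having two disjoint maximum independent sets is equivalent to $G$ having two disjoint maximal independent sets; since maximal equals maximum in a well-covered graph, such a reduction transfers hardness directly. The engine of the construction is local padding: to each vertex, or to suitable gadgets, one attaches simplices so that, by Proposition \ref{prop1}, every added neighbourhood forces a uniform contribution to each maximal independent set, thereby flattening all maximal independent sets to a common size and making $G'$ well-covered, while a faithful copy of the adjacency structure of $G$ continues to govern which vertices may be selected simultaneously.

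The hard part will be reconciling the competing demands of the padding. Making $G'$ well-covered pushes toward corona-type attachments, yet the cleanest such attachment, $H\circ K_1$, is too rigid: by Corollary \ref{cor2} it admits two disjoint maximum independent sets precisely when $H$ is bipartite, which erases the combinatorial content of the maximal-independent-set instance. One therefore needs a subtler gadget---using larger simplices $K_p$ with $p\ge 2$, or mixed coronas $G\circ\mathcal{H}$ with complete $H_v$ as in \cite{Topp}---that (i) equalizes the sizes of all maximal independent sets, (ii) creates no spurious pair of disjoint maximum independent sets, and (iii) preserves the intended pair on ``yes'' instances. Controlling obstruction (ii) is where I expect the real difficulty: the reasoning behind the Schaudt result identifies induced copies of $C_{2k+1}\circ K_1$ as exactly what can block a well-covered graph from having two disjoint maximum independent sets, so the reduction must be arranged so that such odd-cycle coronas appear, unavoidably, if and only if the source instance is a ``no'' instance---and encoding this global avoidance condition through purely local gadgets is the crux. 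As a fallback I would attempt a direct reduction from $3$-SAT, with variable and clause gadgets each locally well-covered, linked so that a globally consistent choice of two disjoint maximum independent sets exists exactly when the formula is satisfiable, again using pendant simplices to enforce well-coveredness globally.
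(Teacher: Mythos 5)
The statement you are addressing is posed in the paper as a \emph{conjecture}: the authors give no proof, and none is known. Your text is accordingly not a proof but a research plan, and it should be judged as such. The parts that are actually argued are sound: separating membership from hardness is the right first move; under the well-covered promise a pair of disjoint maximal independent sets is indeed a polynomially checkable certificate (since maximal equals maximum there); and you are right that without that promise the problem is not obviously in \textbf{NP}, because certifying maximality of size requires knowing $\alpha(G)$. Reducing from the disjoint-maximal-independent-sets problem of \cite{HLR2009} is also the natural source problem.

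The genuine gap is that the reduction --- the entire content of the hardness claim --- is never constructed. Everything after ``the engine of the construction is local padding'' is a description of what a gadget would have to do, together with an accurate account of why the obvious candidates fail: $H\circ K_1$ trivializes the question by Corollary \ref{cor2}, and $G\circ K_p$ with $p\geq2$ always has two disjoint maximum independent sets by Corollary \ref{cor5}, so corona-type padding destroys the instance rather than encoding it. You have therefore identified the obstruction without overcoming it. A further technical slip: you treat Schaudt's theorem as a characterization (``identifies induced copies of $C_{2k+1}\circ K_1$ as exactly what can block\ldots''), but the cited result is only a sufficient condition --- a well-covered graph containing an induced $C_{2k+1}\circ K_1$ may still have two disjoint maximum independent sets --- so arranging for such coronas to appear ``if and only if the source instance is a no-instance'' would not by itself decide the target instance. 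In short, your proposal is a reasonable map of the difficulty, consistent with the paper's decision to leave this as an open conjecture, but it proves nothing beyond the easy membership direction for the promise version.
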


The friendship graph $F_{q}$ is a non-K\"{o}nig-Egerv\'{a}ry graph with
exactly $2^{\alpha\left(  G\right)  }$ maximum independent sets. Thus Theorem
\ref{th2} motivates the following.

\begin{problem}
Characterize non-K\"{o}nig-Egerv\'{a}ry graphs with $\left\vert \Omega
(G)\right\vert =2^{\alpha\left(  G\right)  }$.
\end{problem}

\section{Acknowledgements}

We express our gratitude to Isabel Beckenbach, who suggested a number of
remarks that helped us make proofs of some theorems clearer.

\end{document}